\title{A Note on the Stability of the Sherman-Morrison-Woodbury Formula}
\author{
Linkai Ma\thanks{Department of Computer Science, Purdue University, West Lafayette, IN 47907, USA, \href{mailto:ma856@purdue.edu}{ma856@purdue.edu}}\and
Christos Boutsikas\thanks{Department of Computer Science, Purdue University, West Lafayette, IN 47907, USA, \href{mailto:cboutsik@purdue.edu}{cboutsik@purdue.edu}}\and
Mehrdad Ghadiri\thanks{Massachusetts Institute of Technology
, Cambridge, MA 02139, USA, \href{mailto:mehrdadg@mit.edu}{mehrdadg@mit.edu}}\and
Petros Drineas\thanks{Department of Computer Science, Purdue University, West Lafayette, IN 47907, USA, \href{mailto:pdrineas@purdue.edu}{pdrineas@purdue.edu}}
}
\begin{document}
\maketitle

\begin{abstract}
We study the numerical stability of the Sherman–Morrison–Woodbury (SMW) identity. Let $\Bb = \Ab + \Ub \Vb^T$ and assume $\Ub$ and $\Vb$ both have full-column rank. We explore error bounds for the SMW identity when we are only able to compute approximate inverses. For both forward and backward errors, we present upper bounds as a function of the two-norm error of the approximate inverses. We verify with numerical experiments that, in certain cases, our bounds accurately capture the behavior of the errors.
\end{abstract}

\section{Introduction}

The Sherman-Morrison-Woodbury (SMW) formula is a fundamental tool in numerical linear algebra, providing an efficient approach for updating the inverse of a matrix under low-rank modifications. It is widely used in computational and applied mathematics, as well as in statistics~\cite{guttel2024sherman,nino2015efficient}, economics~\cite{riddell1975recursive}, mechanical engineering~\cite{wu2020model}, etc. For an extensive review of the history and applications of SMW see~\cite{hager1989updating}.

The numerical stability of the SMW formula, especially in high-dimensional or ill-conditioned settings, remains an important concern and an area of ongoing research. Two important contributions in this line of research were the works of Yip~\cite{yip1986note} and, more recently, Ghadiri~\textit{et al.}~\cite{ghadiri2023bit} (see Section~\ref{prior_work} for detailed discussion of prior work). Our point of departure in this paper, following the lines of~\cite{ghadiri2023bit}, is the case where one can only compute approximate inverses while applying the SMW formula. Recall that, for an invertible matrix $\Ab \in \mathbb{R}^{n \times n}$ and matrices $\Ub,\Vb \in \mathbb{R}^{n \times k}$, the SMW formula expresses the inverse of $\Bb = \Ab + \Ub \Vb^T$ as an update to $\Ab^{-1}$:
\begin{equation*}
\Bb^{-1} = \Ab^{-1} - \Ab^{-1} \Ub (\Ib + \Vb^T \Ab^{-1} \Ub)^{-1} \Vb^T \Ab^{-1}.
\end{equation*}
Importantly, one needs to assume that the so-called \textit{capacitance matrix} $\Ib + \Vb^T \Ab^{-1} \Ub$ is invertible. We consider the setting where $\Ab^{-1}$ and/or the capacitance matrix are only inverted approximately, i.e., $\Ab^{-1}$ is approximated by $\Abtil^{-1}$ and the inverse of the capacitance matrix is approximated by $\Zb^{-1}$. Then, our approximate inverse of $\Bb$ becomes:
\begin{equation}
    \widetilde{\Bb} ^{-1} = \Abtil^{-1} - \Abtil^{-1} \Ub \Zb^{-1} \Vb^T \Abtil^{-1}. \label{approx_inv_formula}
\end{equation}
Our work provides \textit{forward} and \textit{backward} error bounds for the update formula of eqn.~(\ref{approx_inv_formula}). Specifically, we bound:
\begin{itemize}
    \item Forward Error (Theorem~\ref{forward_error_thm}): Is $\widetilde{\Bb} ^{-1}$ close to $\Bb^{-1}$?
    \item Backward Error (Theorem~\ref{backward_thm}): Is $\widetilde{\Bb}$ close to $\Bb$? 
\end{itemize}
In both cases, we seek to bound the two-norms of the error matrices $\widetilde{\Bb} ^{-1}-\Bb^{-1}$ and $\widetilde{\Bb}-\Bb$ as a function of the approximation errors in computing the inverses of $\Ab$ and the capacitance matrix. Specifically, let $\epsilon_1$ and $\epsilon_2$ denote the following approximation errors:
\begin{flalign}
\epsilon_1 &= \| \Abtil^{-1} - \Ab^{-1} \|_2,\label{eqn:assumption1}   \\
\epsilon_2 &= \| \Zb^{-1} - \inv{\Ib + \Vb^T \Abtil^{-1} \Ub} \|_2.\label{eqn:assumption2}
\end{flalign}
Our precise forward and backward error bounds are technical and involved and we defer their presentation to Section~\ref{sec_proof}. Here, we focus on simplified versions of our bounds. Let $\sigma_{\min}(\Ab)$ denote the smallest singular value of $\Ab$.
Consider the case where we have a ``small'' update of the form $\Bb = \Ab + \Ub \Vb^T$, with $\twonorm{\Ub} \twonorm{\Vb} \leq .5\cdot \sigma_{\min}(\Ab)$. 
%
%
Then (see Corollary~\ref{remark:forward} for details), our forward error bound reduces to:
\begin{equation}
    \twonorm{\widetilde{\Bb}^{-1}-\Bb^{-1}} \leq 2\epsilon_2 \twonorm{\Ab^{-1}} + 12\epsilon_1.
    \label{simple_forward_bound}
\end{equation}
Under the same assumptions (see Corollary~\ref{remark:backward} for details), our backward error bound simplifies to:
\begin{equation}
    \twonorm{\widetilde{\Bb}-\Bb} \leq 2 \epsilon_1 \twonorm{\Ab}^2 + 8\epsilon_2.
    \label{simple_backward_bound}
\end{equation}
The two bounds presented above reveal an intriguing dichotomy, which, to the best of our knowledge, is a novel observation. For small updates, the forward error bound for the SMW formula is primarily influenced by the approximation error in the inversion of the capacitance matrix and scales linearly with this error and the two-norm of $\Ab^{-1}$. Intuitively, the forward error is sensitive to both the proximity of $\Ab$ to singularity, as measured by the inverse of its smallest singular value (recall that $\|\Ab^{-1}\|_2 = \nicefrac{1}{\sigma_{\min}(\Ab)}$), and the magnitude of the capacitance matrix inversion error. Conversely, the backward error for the SMW formula is dominated by the approximation error in the inversion of $\Ab$ and scales linearly with this error and the (square of) the two-norm of $\Ab$. This dichotomy, validated by our numerical experiments in Section \ref{sec_numerical_experiment}, provides practical guidance: practitioners should prioritize the numerical accuracy of inverting either $\Ab$ or the capacitance matrix based on the desired type of error guarantee, forward or backward. 

Another interesting case is when the capacitance matrix is not well-behaved. Let $\|\Ub\|_2\|\Vb\|_2 = \lambda$ and let $\epsilon_1 = \epsilon_2 =\epsilon$.
Suppose $\twonorm{(\Ib + \Vb^T \Ab^{-1} \Ub)^{-1}}\leq \alpha$ is large (see Corollary~\ref{alpha large} for details). Then, our forward error bound is dominated by the term
\begin{equation}
    \epsilon \alpha^2\lambda^2 \twonorm{\Ab^{-1}}^2.
    \label{simple_forward_bound_2}
\end{equation}
Now suppose $\twonorm{\Ib + \Vb^T \Ab^{-1} \Ub}\leq \beta$ is large (see Corollary~\ref{beta large} for details). Then, our backward error bound will be dominated by the term
\begin{equation}
    \epsilon \lambda \beta^2.
    \label{simple_backward_bound_2}
\end{equation}
Our experiments in Section~\ref{vary capacitance} verify that our error bounds successfully capture the behavior of the actual errors. The forward error grows linearly with the (square of) the inverse of the smallest singular value of the capacitance matrix, while the backward error is affected by the (square of) the largest singular value of the capacitance matrix.


The paper is organized as follows: Section~\ref{sec_background} introduces notation and reviews prior work. Section~\ref{sec_proof} states our forward and backward error bounds and their proofs. Section \ref{sec_numerical_experiment} presents numerical experiments to verify our theory and Section \ref{sec_future} discusses potential directions for future work.

\section{Background} \label{sec_background}

\subsection{Notation}
We denote scalars using lowercase letters (i.e., $x$, $y$). We denote vectors using bold lowercase letters (i.e., $\xb$, $\yb$). We denote matrices using bold uppercase letters (i.e., $\Ab$, $\Bb$).
We use $\bold{e_i}$ to denote the $i$-th canonical vector and $\Ib$ to denote the identity matrix where its dimensions will be clear from the context. We use $\Ab^{\dagger}$ to denote the Moore–Penrose pseudoinverse of matrix $\Ab$; $\|\Ab\|_2$ to denote the spectral norm of a matrix; and $\kappa(\Ab) = \|\Ab\|_2 \| \Ab^{\dagger} \|_2 $ to denote its condition number. 

%

\subsection{Prior Work}\label{prior_work} The stability of the SMW formula has been a subject of interest in numerical linear algebra starting with work by G.~W.~Stewart~\cite{stewart1974modifying}. His work analyzed the stability of the formula for rank-one updates in the context of modifying pivot elements in Gaussian elimination. Later, Yip~\cite{yip1986note} extended the stability analysis to full-rank updates and presented bounds on the condition number of the capacitance matrix. Specifically, given invertible square matrices $\Ab$ and $\Bb$ 
%
%
and rank-$k$ update matrices $\Ub$ and $\Vb$, the condition number of the capacitance matrix is bounded by
\begin{equation} \label{yip_1}
    \kappa \left( \Ib - \Vb^{T} \Ab^{-1} \Ub \right) \leq \min \{ \kappa^2(\Ub), \kappa^2\left( \Vb^{T} \right) \} \kappa(\Ab) \kappa(\Bb).
\end{equation}
Moreover, under the assumption that the rank-$k$ matrices $\Ub, \, \Vb$ have a more specific structure - in particular, that $\Ub \Vb^{T}$ consists of exactly $n-k$ zero rows (columns) and the remaining $k$ rows (columns) are linearly independent,~\cite{yip1986note} simplified the bound of eqn.~(\ref{yip_1}) to
\begin{equation} \label{yip_2}
    \kappa \left( \Ib - \Vb^{T} \Ab^{-1} \Ub \right) \leq \kappa(\Ab) \kappa(\Bb).
\end{equation}
The motivation underlying the study of the conditioning of the capacitance matrix is that it may affect the stability of SMW when solving a linear system using the perturbed matrix $\Bb$, even if both $\Ab$ and $\Bb$ are well-conditioned. 

A more recent line of research on the stability of SMW was motivated by analyzing bit complexity in optimization problems. In this setting, inverse maintenance problems are closely connected with the stability of the SMW formula. In this context, Ghadiri \textit{et al.}~\cite{ghadiri2023bit} analyzed the backward stability of the SMW formula and presented Frobenius norm bounds. More precisely, given invertible matrices $\Ab \in \R^{n \times n}$, $\tilde{\Ab} \in \R^{n \times n}$, $\Cb \in \R^{k \times k}$, and assuming that the matrices $\Ub,\Vb \in \R^{n \times k}$, and the matrix $\Cb \in \mathbb{R}^{k \times k}$ satisfy the condition that $\Ab + \Ub \Cb \Vb^{T}$ is invertible, Lemma I.11 in~\cite{ghadiri2023bit} states that
\begin{equation} \label{Ghadiri_main_result}
    \left\Vert \left( \tilde{\Ab}^{-1} - \tilde{\Ab}^{-1} \Ub \Zb^{-1} \Vb^{T} \tilde{\Ab}^{-1}  \right)^{-1} - \left( \Ab + \Ub \Cb \Vb^{T} \right) \right\Vert_{F} \leq 512 \rho ^{26} \epsilon_{2} + \epsilon_{1}.
\end{equation}
Unlike our setting,~\cite{ghadiri2023bit} assumes that $\Ab$ instead of $\Ab^{-1}$ is approximated by $\tilde{\Ab}$:
\begin{equation*}
    \left\Vert \tilde{\Ab} -\Ab \right\Vert_{F} \leq \epsilon_{1}<1.
\end{equation*}
Similar to our setting, $\Zb^{-1}$ is the approximate inverse of the capacitance matrix and satisfies
\begin{equation*}
    \left\Vert \Zb^{-1} - \left( \Cb^{-1}  + \Vb^{T} \tilde{\Ab}^{-1} \Ub \right)^{-1}  \right\Vert_{F} \leq \epsilon_{2} < 1.
\end{equation*}
We also note that the bound of eqn.~(\ref{Ghadiri_main_result}) assumes that 
\begin{align*}
    \max  \bigg\{ 
    &\| \Ab \|_{F} , \, \| \Ab^{-1} \|_{F}, \| \Cb \|_{F} , \, \| \Cb^{-1} \|_{F}, \, \|\Ub \|_{F}, \, \| \Vb \|_{F}, \\ 
    &\left\Vert \Ab + \Ub \Vb^{T}\right\Vert_{F}, \, \left\Vert (\Ab + \Ub \Vb^{T} )^{-1}\right\Vert_{F} \bigg\} \leq \rho.
\end{align*}
\cite{ghadiri2023bit} also assumes that $\rho > n + k$, which forces the above error bound to scale as a function of at least $(n+k)^{26}$. This assumption in~\cite{ghadiri2023bit} arises due to the use of the Frobenius norm, but our work does not require it.

\subsection{Improving the bound of~\cite{ghadiri2023bit} for the two-norm case} The bound of eqn.~(\ref{Ghadiri_main_result}) applies to the generalized SMW identity, also known as the Woodbury matrix identity~\cite{woodbury1950inverting}. For the special case where $\Cb = \Ib$, we derive a novel bound for the \textit{two-norm} backward error, significantly improving the dependency on $\rho$ \textit{and} removing the assumption that $\rho$ needs to be strictly larger than $n + k$.
Precisely, assume 
\begin{align*}
    \max  \bigg\{ 
    \| \Ab \|_{2} , \, \| \Ab^{-1} \|_{2}, \, \|\Ub \|_{2}, \, \| \Vb \|_{2},
    \left\Vert \Ab + \Ub \Vb^{T}\right\Vert_{2}, \, \left\Vert (\Ab + \Ub \Vb^{T} )^{-1}\right\Vert_{2} \bigg\} \leq \rho.
\end{align*}
Also assume
\begin{align*}
    \twonorm{\tilde{\Ab} - \Ab} \leq \epsilon_{1} < 1\quad \mbox{and} \quad \twonorm{\Zb^{-1} - \left( \Ib +  \Vb^{T} \tilde{\Ab}^{-1} \Ub \right)^{-1}} \leq \epsilon_{2},
\end{align*}
with\footnote{Actually, $\epsilon_2\leq \nicefrac{\rho^3}{8 (1+\rho)^4 (\rho^3+1)^2}$ suffices.} $\epsilon_{2} \leq \nicefrac{1}{512\rho^7}$. Then, 
\begin{equation} \label{Ghadiri_main_result_norm_2}
    \left\Vert \left( \tilde{\Ab}^{-1} - \tilde{\Ab}^{-1} \Ub \Zb^{-1} \Vb^{T} \tilde{\Ab}^{-1}  \right)^{-1} - \left( \Ab + \Ub  \Vb^{T} \right) \right\Vert_{F} \leq 512 \epsilon_2 \rho^{14} + \epsilon_{1}.
\end{equation}
See Appendix~\ref{Ghadiri_main_result_norm_2_proof} for the proof.

\subsection{Two lemmas from prior work}
We will make frequent use of Lemma~\ref{first_lemma}, which allows us to bound the two-norm difference between the inverses of two ``close'' matrices. Lemma \ref{first_lemma} is adapted from Lemma II.1 in~\cite{ghadiri2023bit}; we note that~\cite{ghadiri2023bit} proved a Frobenius norm bound, while we are interested in two-norm bounds.
\begin{lemma}
 Let  $\Mb, \Nb$ be $n\times n$ invertible matrices and let $\|\Nb^{-1}\|_2 \leq \rho$ for some $\rho >1$. Assume $\|\Mb-\Nb\|_2 \leq \epsilon \leq \nicefrac{1}{2\rho}$, then $\|\Mb^{-1}\|_2 \leq 2 \rho$, and
\begin{equation*}
    \|\Mb^{-1}-\Nb^{-1}\|_2 \leq 2 \rho^2 \epsilon.
\end{equation*}
\label{first_lemma}
\end{lemma}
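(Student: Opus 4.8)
The plan is to exploit the standard Neumann-series perturbation argument for matrix inverses, which underlies most bounds of this flavor. First I would write $\Mb$ as a multiplicative perturbation of $\Nb$: namely $\Mb = \Nb + (\Mb - \Nb) = \Nb\bigl(\Ib + \Nb^{-1}(\Mb - \Nb)\bigr)$. Setting $\Eb = \Nb^{-1}(\Mb - \Nb)$, submultiplicativity of the two-norm together with the hypotheses gives $\twonorm{\Eb} \le \twonorm{\Nb^{-1}}\,\twonorm{\Mb - \Nb} \le \rho\epsilon \le \tfrac12$. Since $\twonorm{\Eb} < 1$, the matrix $\Ib + \Eb$ is invertible and the Neumann bound yields $\twonorm{(\Ib + \Eb)^{-1}} \le \tfrac{1}{1 - \twonorm{\Eb}} \le \tfrac{1}{1 - \rho\epsilon} \le 2$, using $\rho\epsilon \le \tfrac12$ in the last step. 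In particular $\Mb$ is invertible (so the statement is not vacuous) and $\Mb^{-1} = (\Ib + \Eb)^{-1}\Nb^{-1}$, whence $\twonorm{\Mb^{-1}} \le \twonorm{(\Ib + \Eb)^{-1}}\,\twonorm{\Nb^{-1}} \le 2\rho$, which is the first claimed bound.

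For the second bound I would use the resolvent-type identity $\Mb^{-1} - \Nb^{-1} = \Mb^{-1}(\Nb - \Mb)\Nb^{-1}$, valid for any two invertible matrices. Taking two-norms and applying submultiplicativity together with the bound $\twonorm{\Mb^{-1}} \le 2\rho$ just obtained, the hypothesis $\twonorm{\Mb - \Nb} \le \epsilon$, and $\twonorm{\Nb^{-1}} \le \rho$, gives $\twonorm{\Mb^{-1} - \Nb^{-1}} \le 2\rho \cdot \epsilon \cdot \rho = 2\rho^2\epsilon$, as desired.

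There is essentially no serious obstacle here; the only point requiring a little care is verifying that the condition $\epsilon \le \nicefrac{1}{2\rho}$ is exactly what is needed to make $\twonorm{\Eb} \le \tfrac12$ and hence to control $\twonorm{(\Ib + \Eb)^{-1}}$ by the constant $2$ — this is where the specific numerical constants in the statement come from. (One could alternatively invoke the Banach perturbation lemma as a black box, but the short self-contained Neumann argument is cleaner and keeps the constants transparent.) I would also remark that the identity $\Mb^{-1} - \Nb^{-1} = \Mb^{-1}(\Nb - \Mb)\Nb^{-1}$ is the two-norm analogue of the step used in the Frobenius-norm version in~\cite{ghadiri2023bit}, so the proof is a routine adaptation.
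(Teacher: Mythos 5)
Your proof is correct and follows essentially the same route as the paper: both arguments establish $\twonorm{\Mb^{-1}} \le \rho/(1-\rho\epsilon) \le 2\rho$ (the paper via a self-bounding inequality derived from the Woodbury identity, you via the explicit Neumann series, which are the same computation) and then conclude with the resolvent identity $\Mb^{-1}-\Nb^{-1} = \Mb^{-1}(\Nb-\Mb)\Nb^{-1}$, equivalently the paper's $\Nb^{-1}\Eb\Mb^{-1}$ form. The only (harmless) difference is that your Neumann-series step also re-derives the invertibility of $\Mb$, which the lemma already assumes.
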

The proof is identical to the proof of Lemma II.1 in~\cite{ghadiri2023bit}, using the two-norm instead of the Frobenius norm. For completeness, we reproduce the proof in Appendix~\ref{first_lemma_proof}.
\begin{remark}
It is easy to see that Lemma~\ref{first_lemma} is essentially tight. First of all, in a rather degenerate case, if we set $\Mb = \nicefrac{c}{2}\, \Ib$, $\Nb = c\Ib$ for some constant $c$. Let $\epsilon = \nicefrac{c}{2} $, $\rho = \nicefrac{1}{c}$, then, 
$$\twonorm{\Mb^{-1}-\Nb^{-1}} = \twonorm{(\nicefrac{2}{c})\Ib - (\nicefrac{1}{c})\Ib} = \nicefrac{1}{c}= 2 \rho^2 \epsilon.$$ 
Therefore, Lemma \ref{first_lemma} is tight. More generally, assume that $\Nb$ has full rank and let $\Nb = \sum_{i=1}^n \sigma_i(\Nb) \ub_i \vvb_i^T$ be the SVD of $\Nb$, where $\ub_i$ and $\vvb_i$ are the left and right singular vectors of $\Nb$, respectively, and $\sigma_i(\Nb)>0$. 
Let $\Mb = \Nb + \frac{1}{2} \sigma_n \ub_n \vvb_n^T$.
Then, $\|\Nb^{-1}\|_2 = \frac{1}{\sigma_n}$, and $\|\Mb - \Nb\|_2 = \frac{1}{2} \sigma_n$. Let $\rho = \frac{1}{\sigma_n}$ and $\epsilon = \frac{1}{2} \sigma_n $. It follows that
\begin{equation}
    \|\Mb^{-1} - \Nb^{-1}\|_2= \frac{1}{3} \sigma_n ^{-1} = \frac{1}{3} \left( 2 \rho^2 \epsilon \right).
\end{equation}
Therefore, Lemma~\ref{first_lemma} is tight for general matrices as well, perhaps up to a small constant factor.
\end{remark}

\begin{lemma}\label{lem:lemma_NL_1}
    Assuming $\Ub \text{ and }\Vb$ have full column rank; $\Ab \text{ and }\Bb$ are invertible; $\Bb = \Ab + \Ub \Vb^T$, the following identities hold:
    \begin{align}
       & \Ib + \Vb^T \Ab^{-1} \Ub = \Vb^T \Ab^{-1} \Bb (\Vb^T)^\dagger,\ \mbox{and} \label{identity1}\\
       & \left( \Ib + \Vb^T \Ab^{-1} \Ub \right)^{-1} = \Vb^T \Bb^{-1}\Ab (\Vb^T)^\dagger.\label{identity2}
    \end{align}
\end{lemma}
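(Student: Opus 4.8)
The plan is to prove both identities by direct algebraic manipulation, using only the hypothesis that $\Vb$ has full column rank together with $\Bb = \Ab + \Ub\Vb^T$. The one fact I need about the pseudoinverse is that, since $\Vb$ has full column rank, $\Vb^T\Vb$ is invertible, so $(\Vb^T)^\dagger = \Vb(\Vb^T\Vb)^{-1}$ and hence $\Vb^T(\Vb^T)^\dagger = \Ib_k$. (I will be careful \emph{not} to use $(\Vb^T)^\dagger\Vb^T$, which is only the orthogonal projector onto $\mathrm{range}(\Vb)$, not the identity.)

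For identity~(\ref{identity1}), I substitute $\Bb = \Ab + \Ub\Vb^T$ and expand,
\[
\Vb^T\Ab^{-1}\Bb(\Vb^T)^\dagger = \Vb^T\Ab^{-1}\bigl(\Ab + \Ub\Vb^T\bigr)(\Vb^T)^\dagger = \Vb^T(\Vb^T)^\dagger + \Vb^T\Ab^{-1}\Ub\,\Vb^T(\Vb^T)^\dagger = \Ib + \Vb^T\Ab^{-1}\Ub,
\]
where both simplifications use $\Vb^T(\Vb^T)^\dagger = \Ib$. For identity~(\ref{identity2}), the same expansion with the roles of $\Ab$ and $\Bb$ swapped (writing $\Ab = \Bb - \Ub\Vb^T$) gives $\Vb^T\Bb^{-1}\Ab(\Vb^T)^\dagger = \Ib - \Vb^T\Bb^{-1}\Ub$, so it suffices to show that the $k\times k$ matrix $\Ib - \Vb^T\Bb^{-1}\Ub$ is the inverse of $\Ib + \Vb^T\Ab^{-1}\Ub$. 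I verify this by multiplying the two factors out: in the product $(\Ib + \Vb^T\Ab^{-1}\Ub)(\Ib - \Vb^T\Bb^{-1}\Ub)$, the cross term $\Vb^T\Ab^{-1}\Ub\Vb^T\Bb^{-1}\Ub$ collapses via the identity $\Ab^{-1}\Ub\Vb^T = \Ab^{-1}\Bb - \Ib$ (which follows from $\Ub\Vb^T = \Bb - \Ab$) to $\Vb^T\Ab^{-1}\Ub - \Vb^T\Bb^{-1}\Ub$, and the four resulting terms cancel in pairs to leave $\Ib$. Since both factors are square of the same size, this one-sided identity already certifies that $\Ib + \Vb^T\Ab^{-1}\Ub$ is invertible with inverse $\Ib - \Vb^T\Bb^{-1}\Ub = \Vb^T\Bb^{-1}\Ab(\Vb^T)^\dagger$, which is~(\ref{identity2}).

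I expect no real obstacle: the whole argument is a short computation. The only points needing a little care are (i) invoking full column rank of $\Vb$ to get $\Vb^T(\Vb^T)^\dagger = \Ib$ rather than mistakenly using $(\Vb^T)^\dagger\Vb^T = \Ib$, and (ii) observing that a one-sided inverse of a square matrix is automatically two-sided, so there is no need to separately check $(\Ib - \Vb^T\Bb^{-1}\Ub)(\Ib + \Vb^T\Ab^{-1}\Ub) = \Ib$. As an alternative to the last step, (\ref{identity2}) could be derived from (\ref{identity1}) by applying it with $(\Ab,\Ub,\Vb)$ replaced by $(\Bb,\Ub,-\Vb)$, since then the ``perturbed'' matrix is $\Bb - \Ub\Vb^T = \Ab$; however, that route still requires the product computation to recognize the result as the inverse of $\Ib + \Vb^T\Ab^{-1}\Ub$ rather than merely as the analogue of~(\ref{identity1}), so I would keep the direct verification.
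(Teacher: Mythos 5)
Your proof is correct and follows essentially the same route as the paper: both establish (\ref{identity1}) by expanding $\Bb = \Ab + \Ub\Vb^T$ and using $\Vb^T(\Vb^T)^\dagger = \Ib$, then show $\Vb^T\Bb^{-1}\Ab(\Vb^T)^\dagger = \Ib - \Vb^T\Bb^{-1}\Ub$ and verify by direct multiplication (collapsing the cross term via $\Ub\Vb^T = \Bb - \Ab$) that this is the inverse of $\Ib + \Vb^T\Ab^{-1}\Ub$. The only cosmetic difference is that you multiply the factors in the opposite order and justify the two-sidedness abstractly, whereas the paper checks one order and notes the other is analogous.
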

Lemma~\ref{lem:lemma_NL_1} is adapted from eqns.~(6) and~(7) in~\cite{yip1986note}. (See Appendix~\ref{appendix_proof_1} for proofs.)
\section{Our Bounds}\label{sec_proof}

We now present in detail both our forward and backward error bounds. 

\subsection{Forward Error Bound}

We first discuss necessary assumptions for our forward error bound; next, we present and discuss our bound and the special case mentioned in the introduction; and finally we present its proof.

\subsubsection{Assumptions}\label{assumptions}

We assume $\Ub$ and $\Vb$ have full column rank and use $\Ub^\dagger, \Vb^\dagger$ to denote their Moore–Penrose inverses. Let 
%
$$\lambda = \twonorm{\Ub} \twonorm{\Vb}.$$ 
Also recall the definitions of $\epsilon_1$ and $\epsilon_2$ in eqns.~(\ref{eqn:assumption1}) and~(\ref{eqn:assumption2}), since our (forward) error bound will depend on the parameters $\epsilon_1$ and $\epsilon_2$, which characterize the quality of the approximate inverses that are used in the SMW identity.

\subsubsection{The bound}\label{sxn:factualbound}
We are now ready to state our forward error bound for the SMW formula.
\begin{theorem}
If $\twonorm{\inv{\Ib + \Vb^T \invnop{\Ab}\Ub}}\leq \alpha$ and
    \begin{equation}
        \epsilon_1 < \frac{1}{2 \,\lambda\,\alpha},
        \label{forward_bound_assumption}
    \end{equation}
    then
    \begin{flalign}
    \twonorm{\Bb^{-1} - \left(\Abtil^{-1} - \Abtil^{-1} \Ub \Zb^{-1} \Vb^T \Abtil^{-1}\right)} & \leq \epsilon_1 + \epsilon_1\lambda \alpha 
    \left( 2 \twonorm{\Ab^{-1}} + \epsilon_1 \right) \nonumber\\
    &+ \lambda(\twonorm{\Ab^{-1}} + \epsilon_1)^2 \left( \epsilon_2 + 2\epsilon_1 \lambda \,\alpha^2 \right).\label{forward_bound}
\end{flalign}
\label{forward_error_thm}
\end{theorem}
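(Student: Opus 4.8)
My strategy is to decompose the forward error $\Bb^{-1} - \Bbtil^{-1}$ into a telescoping sum of intermediate quantities, each of which isolates the effect of \emph{one} approximation. Write $\Cb \defeq \Ib + \Vb^T \Ab^{-1}\Ub$ for the exact capacitance matrix and $\Cbtil \defeq \Ib + \Vb^T \Abtil^{-1}\Ub$ for its perturbed version (built from $\Abtil^{-1}$ rather than $\Ab^{-1}$). The exact SMW identity gives $\Bb^{-1} = \Ab^{-1} - \Ab^{-1}\Ub\Cb^{-1}\Vb^T\Ab^{-1}$. I would then introduce two intermediate matrices: first replace $\Ab^{-1}$ everywhere by $\Abtil^{-1}$ but keep using the inverse of the \emph{correct} (for $\Abtil$) capacitance matrix, i.e. $\Mb_1 \defeq \Abtil^{-1} - \Abtil^{-1}\Ub\Cbtil^{-1}\Vb^T\Abtil^{-1}$; then replace $\Cbtil^{-1}$ by $\Zb^{-1}$, landing on $\Bbtil^{-1}$. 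Thus
\[
\Bb^{-1} - \Bbtil^{-1} = \underbrace{(\Bb^{-1} - \Mb_1)}_{\text{error from }\Abtil\text{ vs }\Ab} + \underbrace{(\Mb_1 - \Bbtil^{-1})}_{\text{error from }\Zb\text{ vs }\Cbtil^{-1}},
\]
and I bound each piece by triangle inequality and submultiplicativity.

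The second piece is the easy one: $\Mb_1 - \Bbtil^{-1} = \Abtil^{-1}\Ub(\Zb^{-1} - \Cbtil^{-1})\Vb^T\Abtil^{-1}$, so its norm is at most $\twonorm{\Abtil^{-1}}^2 \lambda\,\epsilon_2 \le (\twonorm{\Ab^{-1}}+\epsilon_1)^2\lambda\,\epsilon_2$, using $\twonorm{\Abtil^{-1}}\le\twonorm{\Ab^{-1}}+\epsilon_1$ from eqn.~(\ref{eqn:assumption1}); this produces the $\lambda(\twonorm{\Ab^{-1}}+\epsilon_1)^2\epsilon_2$ term in eqn.~(\ref{forward_bound}). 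For the first piece, write $\Ab^{-1} = \Abtil^{-1} + \Eb$ with $\twonorm{\Eb}\le\epsilon_1$ and expand $\Bb^{-1} - \Mb_1$. The term $\Ab^{-1}-\Abtil^{-1}=\Eb$ contributes $\epsilon_1$. The remaining difference $\Abtil^{-1}\Ub\Cbtil^{-1}\Vb^T\Abtil^{-1} - \Ab^{-1}\Ub\Cb^{-1}\Vb^T\Ab^{-1}$ I split further into (a) the change in the outer $\Ab^{-1}$ factors (holding $\Cb^{-1}$ or $\Cbtil^{-1}$ fixed) — each such swap costs a factor $\lambda$ for $\Ub\Vb^T$, a factor $\alpha$ for the capacitance inverse, and $\epsilon_1$ for the swap, giving the $\epsilon_1\lambda\alpha(2\twonorm{\Ab^{-1}}+\epsilon_1)$ contribution — and (b) the change $\Cbtil^{-1}-\Cb^{-1}$ in the middle. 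For (b) I invoke Lemma~\ref{first_lemma} with $\Mb = \Cbtil$, $\Nb = \Cb$: since $\twonorm{\Cbtil - \Cb} = \twonorm{\Vb^T(\Abtil^{-1}-\Ab^{-1})\Ub}\le\lambda\epsilon_1$, and $\twonorm{\Cb^{-1}}\le\alpha$, the hypothesis $\epsilon_1 < \frac{1}{2\lambda\alpha}$ (eqn.~(\ref{forward_bound_assumption})) is exactly what is needed to apply the lemma, yielding $\twonorm{\Cbtil^{-1}-\Cb^{-1}}\le 2\alpha^2\lambda\epsilon_1$ and $\twonorm{\Cbtil^{-1}}\le 2\alpha$; multiplying by the surrounding $\twonorm{\Abtil^{-1}}^2\le(\twonorm{\Ab^{-1}}+\epsilon_1)^2$ and $\lambda$ gives the $2\epsilon_1\lambda^2\alpha^2(\twonorm{\Ab^{-1}}+\epsilon_1)^2$ term. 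Collecting all contributions reproduces eqn.~(\ref{forward_bound}).

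**The main obstacle.** The routine part is the algebra; the delicate part is organizing the telescoping so that the bound on $\twonorm{\Cbtil^{-1}}$ (which is $2\alpha$, not $\alpha$, after applying Lemma~\ref{first_lemma}) is used consistently, and so that the cross-terms are grouped exactly as in eqn.~(\ref{forward_bound}) rather than in a looser form. In particular one must be careful that the "fixed-capacitance" swaps in step (a) use whichever of $\Cb^{-1}$, $\Cbtil^{-1}$ keeps the constant at $\alpha$ (bounding intermediate terms by $\twonorm{\Cb^{-1}}\le\alpha$ wherever possible) so that only the genuinely necessary factor of $2$ appears, and that $\twonorm{\Abtil^{-1}}$ is uniformly replaced by $\twonorm{\Ab^{-1}}+\epsilon_1$ at the end. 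A secondary point is verifying that every intermediate matrix ($\Cbtil$, in particular) is actually invertible — but this is guaranteed precisely by eqn.~(\ref{forward_bound_assumption}) via Lemma~\ref{first_lemma}, so the hypothesis does double duty.
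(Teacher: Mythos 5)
Your proposal is correct and follows essentially the same route as the paper: the paper also telescopes through the intermediate quantities (first swapping $\Zb^{-1}$ for $(\Ib+\Vb^T\Abtil^{-1}\Ub)^{-1}$, then the two capacitance inverses via Lemma~\ref{first_lemma} under the hypothesis $\epsilon_1<\nicefrac{1}{2\lambda\alpha}$, then the outer $\Abtil^{-1}$ factors one at a time), and it arranges the fixed-capacitance swaps so that only $\twonorm{(\Ib+\Vb^T\Ab^{-1}\Ub)^{-1}}\leq\alpha$ is needed there, exactly as you anticipated. The only difference is cosmetic grouping (five terms listed at once versus your two-stage split), so the resulting constants match term by term.
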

The first two terms in the above bound depend only on the approximation error $\epsilon_1$, while the third term depends on both $\epsilon_1$ and $\epsilon_2$. The above bound is, not unexpectedly, complicated, and we focus on the special cases discussed in our introduction to help the reader parse it. It is worth noting that if both $\epsilon_1$ and $\epsilon_2$ are equal to zero, then the bound is zero as well. 

\begin{corollary}\label{remark:forward}
Assume that $\epsilon_1$, $\epsilon_2$  and $\sigma_{\min}(\Ab)$ are all upper bounded by one and also assume that $\epsilon_1$ is sufficiently small to satisfy the assumption of Theorem~\ref{forward_error_thm}. Additionally, assume that we have a small update, i.e., $\lambda \leq .5\cdot \sigma_{\min}(\Ab)$. Then, the right-hand side of the bound of eqn.~(\ref{forward_bound}) simplifies to:
\begin{equation*}
   \twonorm{\Bb^{-1} - \left(\Abtil^{-1} - \Abtil^{-1} \Ub \Zb^{-1} \Vb^T \Abtil^{-1}\right)} \leq  2\epsilon_2 \twonorm{\Ab^{-1}} +12\epsilon_1.
\end{equation*}
\end{corollary}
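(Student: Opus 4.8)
The plan is to specialize Theorem~\ref{forward_error_thm}: first pin down a concrete admissible value of $\alpha$ guaranteed by the small-update hypothesis, then bound the three summands on the right-hand side of eqn.~(\ref{forward_bound}) one at a time. Throughout, write $s = \sigma_{\min}(\Ab)$, so that $\twonorm{\Ab^{-1}} = 1/s$; since $s \leq 1$ we have $\twonorm{\Ab^{-1}} \geq 1 \geq \epsilon_1$, an inequality I will use repeatedly.

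First I would check that one may take $\alpha = 2$. Because $\lambda = \twonorm{\Ub}\twonorm{\Vb} \leq s/2$, we get $\twonorm{\Vb^T \Ab^{-1}\Ub} \leq \lambda \twonorm{\Ab^{-1}} = \lambda/s \leq 1/2$, and a Neumann-series estimate (for $\twonorm{\Mb}<1$, $\twonorm{(\Ib+\Mb)^{-1}} \leq (1-\twonorm{\Mb})^{-1}$) then gives $\twonorm{\inv{\Ib + \Vb^T \Ab^{-1}\Ub}} \leq 1/(1-\tfrac12) = 2$. With $\alpha = 2$ the hypothesis~(\ref{forward_bound_assumption}) of Theorem~\ref{forward_error_thm} reads $\epsilon_1 < 1/(4\lambda)$, which is exactly what ``$\epsilon_1$ sufficiently small'' is taken to mean here, so the theorem is applicable.

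Next I would bound the right-hand side of eqn.~(\ref{forward_bound}) term by term, using $\epsilon_1 \leq 1$, $\lambda \leq s/2$, $\alpha = 2$, and $\twonorm{\Ab^{-1}} + \epsilon_1 \leq 2/s$. The first term is $\epsilon_1$. The second term satisfies $\epsilon_1 \lambda \alpha\left(2\twonorm{\Ab^{-1}} + \epsilon_1\right) \leq \epsilon_1 \cdot \tfrac{s}{2}\cdot 2 \cdot \tfrac{3}{s} = 3\epsilon_1$. For the third term I would split the factor $\epsilon_2 + 2\epsilon_1\lambda\alpha^2$ and estimate the two pieces separately: the $\epsilon_2$-piece gives $\lambda\left(\twonorm{\Ab^{-1}}+\epsilon_1\right)^2\epsilon_2 \leq \tfrac{s}{2}\cdot \tfrac{4}{s^2}\cdot \epsilon_2 = \tfrac{2\epsilon_2}{s} = 2\epsilon_2\twonorm{\Ab^{-1}}$, while the $\epsilon_1$-piece gives $2\epsilon_1\lambda^2\alpha^2\left(\twonorm{\Ab^{-1}}+\epsilon_1\right)^2 \leq 2\epsilon_1 \cdot \tfrac{s^2}{4}\cdot 4 \cdot \tfrac{4}{s^2} = 8\epsilon_1$. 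Adding the four contributions yields $\epsilon_1 + 3\epsilon_1 + 2\epsilon_2\twonorm{\Ab^{-1}} + 8\epsilon_1 = 2\epsilon_2\twonorm{\Ab^{-1}} + 12\epsilon_1$, which is the claimed bound.

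The one step I would handle with care is the third term: the factor $\lambda^2\alpha^2$ must be kept together so that it cancels the $1/s^2$ coming from $\left(\twonorm{\Ab^{-1}}+\epsilon_1\right)^2$. If one instead distributed $2\epsilon_1\lambda\alpha^2$ over the square before bounding, the result would carry a spurious factor of $\twonorm{\Ab^{-1}}$ on $\epsilon_1$, which is exactly the $\sigma_{\min}(\Ab)$-dependence the corollary is designed to shed. Everything else is direct substitution of $s\leq 1$, $\lambda\leq s/2$, and $\epsilon_1\leq 1$ (the hypothesis $\epsilon_2\leq 1$ is not actually needed for this simplification).
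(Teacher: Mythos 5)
Your proof is correct and follows essentially the same route as the paper's: establish $\alpha\le 2$ from the small-update hypothesis, then bound the three terms of eqn.~(\ref{forward_bound}) separately to obtain $\epsilon_1$, $3\epsilon_1$, and $2\epsilon_2\twonorm{\Ab^{-1}}+8\epsilon_1$. The only cosmetic difference is that you bound $\twonorm{\inv{\Ib+\Vb^T\Ab^{-1}\Ub}}$ via a Neumann-series estimate, whereas the paper invokes Lemma~\ref{first_lemma} with $\Nb=\Ib$; both yield the same constant.
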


\begin{proof}
   Using $\lambda \leq .5\cdot \sigma_{\min}(\Ab)$, we get $\lambda \twonorm{\Ab^{-1}}\leq .5$. Thus,
   \begin{align*}
    \twonorm{\left(\Ib+\Vb^T \Ab^{-1} \Ub\right) - \Ib }&= \twonorm{\Vb^T \Ab^{-1} \Ub }\leq \lambda \twonorm{\Ab^{-1}} \leq .5.
   \end{align*} 
    The above inequality suggests that $\Ib + \Vb^T \Ab^{-1} \Ub$ is close to $\Ib$. We apply Lemma~\ref{first_lemma} to get
    \begin{equation*}
        \alpha = \twonorm{\inv{\Ib + \Vb^T \Ab^{-1} \Ub}} \leq 2.
    \end{equation*}
    We now switch our attention to the right-hand side of the bound of eqn.~(\ref{forward_bound}).
    The first term is simply $\epsilon_1$. For the second term, recall that $\epsilon_1 \leq 1$ and  $\sigma_{\min}(\Ab) \leq 1$ to get $\epsilon_1 \leq \twonorm{\Ab^{-1}}$. Then, the second term can be bounded as follows:
    \begin{align*}
        \epsilon_1 \lambda \left( 2 \twonorm{\Ab^{-1}} + \epsilon_1 \right) \alpha &\leq 3 \epsilon_1 \lambda \twonorm{\Ab^{-1}}\,\alpha
        \\
        & \leq 6 \epsilon_1 \lambda \twonorm{\Ab^{-1}}\\
        &\leq 3\epsilon_1.
    \end{align*}
    Finally, the third term can be bounded as follows:   
    \begin{align*}
        \lambda(\twonorm{\Ab^{-1}} + \epsilon_1)^2 \left( \epsilon_2 + 2\epsilon_1 \lambda \,\alpha^2 \right)
        &\leq 4 \lambda \twonorm{\Ab^{-1}}^2 \left( \epsilon_2 + 2\epsilon_1 \lambda \,\alpha^2\right)\\ 
        &\leq 4 \lambda \twonorm{\Ab^{-1}}^2 \left( \epsilon_2 + 8\epsilon_1 \lambda  \right)\\
        &= 4 \lambda \twonorm{\Ab^{-1}}^2 \epsilon_2 + 32 \lambda^2 \twonorm{\Ab^{-1}}^2 \epsilon_1\\
        &\leq 2\epsilon_2 \twonorm{\Ab^{-1}} + 8\epsilon_1. 
    \end{align*}
    The first inequality follows using $\epsilon_1\leq \|\Ab^{-1}\|_2$; the second inequality follows from our bound for $\|\inv{\Ib + \Vb^T \invnop{\Ab} \Ub}\|_2$; the last inequality follows from $\lambda \|\Ab^{-1}\|_2 \leq .5$.
\end{proof}


\begin{corollary}\label{alpha large}
    Assume that $\epsilon_1 = \epsilon_2 = \epsilon$ and $\sigma_{\min}(\Ab)$ are all upper bounded by one and also assume that $\epsilon_1$ is sufficiently small to satisfy the assumption of Theorem~\ref{forward_error_thm}. Additionally, assume that $\alpha$ is sufficiently large,i.e.,$\alpha \geq \max(\lambda^{-1},1)$. Then the right-hand side of the bound of eqn.~(\ref{forward_bound}) simplifies to:
    \begin{equation*}
        \twonorm{\Bb^{-1} - \left(\Abtil^{-1} - \Abtil^{-1} \Ub \Zb^{-1} \Vb^T \Abtil^{-1}\right)} \leq 16\epsilon \lambda^2 \twonorm{\Ab^{-1}}^2 \alpha^2 
    \end{equation*}
\end{corollary}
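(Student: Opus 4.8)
The plan is to start from the general forward error bound in eqn.~(\ref{forward_bound}) of Theorem~\ref{forward_error_thm} and bound each of its three additive pieces under the hypotheses $\epsilon_1=\epsilon_2=\epsilon$, $\sigma_{\min}(\Ab)\le 1$ (equivalently $\twonorm{\Ab^{-1}}\ge 1$), $\alpha\ge\max(\lambda^{-1},1)$, and $\epsilon$ small enough that eqn.~(\ref{forward_bound_assumption}) holds. The guiding intuition is that when $\alpha$ is large, the term $2\epsilon_1\lambda\alpha^2$ inside the last parenthesis of eqn.~(\ref{forward_bound}) dominates $\epsilon_2$, and the whole third summand $\lambda(\twonorm{\Ab^{-1}}+\epsilon_1)^2\cdot 2\epsilon_1\lambda\alpha^2$ absorbs everything else; so the target bound $16\epsilon\lambda^2\twonorm{\Ab^{-1}}^2\alpha^2$ should come out with room to spare.

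First I would record the elementary consequences of the hypotheses: from $\alpha\ge\lambda^{-1}$ we get $\lambda\alpha\ge 1$, hence $\lambda\alpha^2\ge\alpha\ge 1$ and also $\lambda^2\alpha^2\ge 1$; from $\sigma_{\min}(\Ab)\le 1$ we get $\twonorm{\Ab^{-1}}\ge 1$; and, since $\epsilon_1$ satisfies eqn.~(\ref{forward_bound_assumption}), we have $\epsilon_1\lambda\alpha<\tfrac12$, so in particular $\epsilon_1<\tfrac12$ and $\epsilon_1\le\twonorm{\Ab^{-1}}$. These facts let me uniformly compare the three summands of eqn.~(\ref{forward_bound}) against the ``reference quantity'' $Q:=\epsilon\lambda^2\twonorm{\Ab^{-1}}^2\alpha^2$. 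Concretely: the first term $\epsilon_1=\epsilon\le Q$ because $\lambda^2\alpha^2\ge 1$ and $\twonorm{\Ab^{-1}}^2\ge 1$; the second term $\epsilon_1\lambda\alpha(2\twonorm{\Ab^{-1}}+\epsilon_1)\le 3\epsilon_1\lambda\alpha\twonorm{\Ab^{-1}}\le 3\epsilon\lambda^2\alpha^2\twonorm{\Ab^{-1}}^2 = 3Q$, using $\epsilon_1\le\twonorm{\Ab^{-1}}$, then $\alpha\le\lambda\alpha^2$ and $\twonorm{\Ab^{-1}}\le\twonorm{\Ab^{-1}}^2$.

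For the third term, bound $(\twonorm{\Ab^{-1}}+\epsilon_1)^2\le(2\twonorm{\Ab^{-1}})^2=4\twonorm{\Ab^{-1}}^2$ via $\epsilon_1\le\twonorm{\Ab^{-1}}$, and bound the inner parenthesis $\epsilon_2+2\epsilon_1\lambda\alpha^2=\epsilon+2\epsilon\lambda\alpha^2\le 3\epsilon\lambda\alpha^2$ using $1\le\lambda\alpha^2$. Multiplying, the third term is at most $\lambda\cdot 4\twonorm{\Ab^{-1}}^2\cdot 3\epsilon\lambda\alpha^2 = 12\epsilon\lambda^2\twonorm{\Ab^{-1}}^2\alpha^2 = 12Q$. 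Adding the three contributions gives $Q+3Q+12Q=16Q$, which is exactly the claimed bound, so no slack is even wasted. The only mild subtlety — and the place to be careful rather than a genuine obstacle — is making sure every time I replace a factor by a larger one (e.g. $\alpha$ by $\lambda\alpha^2$, or $\twonorm{\Ab^{-1}}$ by $\twonorm{\Ab^{-1}}^2$, or $\epsilon_1$ by $\twonorm{\Ab^{-1}}$), the inequality genuinely goes the right direction under the stated hypotheses; since all of $\lambda\alpha$, $\alpha$, $\twonorm{\Ab^{-1}}$ are $\ge 1$ and $\epsilon_1\le\twonorm{\Ab^{-1}}$, each such step is legitimate. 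I would then write the three estimates as a short \texttt{align*} chain mirroring the proof of Corollary~\ref{remark:forward}.
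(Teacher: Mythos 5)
Your proposal is correct and follows essentially the same route as the paper's proof: bound each of the three summands of eqn.~(\ref{forward_bound}) against $\epsilon\lambda^2\twonorm{\Ab^{-1}}^2\alpha^2$ using $\lambda\alpha^2\ge 1$, $\twonorm{\Ab^{-1}}\ge 1$, and $\epsilon_1\le\twonorm{\Ab^{-1}}$, obtaining contributions of $1$, $3$, and $12$ times that quantity. The only cosmetic difference is that you absorb $\epsilon_2$ into $3\epsilon\lambda\alpha^2$ inside the third term, whereas the paper splits the third term into two pieces and upgrades the smaller one via $\alpha\ge\nicefrac{1}{\sqrt{\lambda}}$; both yield the same $12$.
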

\begin{proof}
    
    Using $\alpha \geq \lambda^{-1}$, we get $\lambda^2 \alpha^2 \geq 1$. We assumed $\sigma_{\min}(\Ab)\leq 1$ and $\twonorm{\Ab^{-1}}\geq1$; therefore, the first term in the bound of Theorem~\ref{forward_error_thm} can be bounded as follows:
    \begin{equation*}
        \epsilon_1 \leq \epsilon \lambda^2 \twonorm{\Ab^{-1}}^2 \alpha^2.
    \end{equation*}
    For the second term, using $\epsilon_1 \leq 1$, we get $\epsilon_1 \leq \twonorm{\Ab^{-1}}$. Then the second term can be bounded similarly:
    \begin{align*}
        \epsilon_1 \lambda \alpha (2\twonorm{\Ab^{-1}}+\epsilon_1) &\leq 3 \epsilon_1 \lambda \alpha \twonorm{\Ab^{-1}}\\
        &\leq 3\epsilon \lambda^2 \alpha^2 \twonorm{\Ab^{-1}}^2.
    \end{align*}
    Now we switch our attention to the last term on the right hand side of eqn.~(\ref{forward_bound}). Using $\epsilon_1 \leq \twonorm{\Ab^{-1}}$, we get:
    \begin{align*}
        \lambda (\twonorm{\Ab^{-1}} + \epsilon_1)^2 \left( \epsilon_2 + 2\epsilon_1 \lambda \,\alpha^2 \right) &\leq 4 \lambda \twonorm{\Ab^{-1}}^2 \epsilon +8 \epsilon \lambda^2 \twonorm{\Ab^{-1}}^2  \alpha^2 .
    \end{align*}
    Since $\nicefrac{1}{\sqrt{\lambda}}\leq \max(\lambda^{-1},1)$, it follows that $\alpha \geq \nicefrac{1}{\sqrt{\lambda}}$. Then,
    \begin{equation*}
        4 \lambda \twonorm{\Ab^{-1}}^2 \epsilon \leq 4  \epsilon \lambda^2 \twonorm{\Ab^{-1}}^2 \alpha^2.
    \end{equation*}
    Finally, adding all the terms together, we get:
    \begin{equation*}
       \twonorm{\Bb^{-1} - \left(\Abtil^{-1} - \Abtil^{-1} \Ub \Zb^{-1} \Vb^T \Abtil^{-1}\right)} \leq 16\epsilon \lambda^2 \twonorm{\Ab^{-1}}^2 \alpha^2.
    \end{equation*}
    
\end{proof}
We also provide an alternative formulation for Theorem~\ref{forward_error_thm}.
\begin{corollary}\label{forward old bound}
    Under the assumptions of Theorem~\ref{forward_error_thm},
    \begin{flalign*}
        \twonorm{\Bb^{-1}-\left(\Abtil^{-1} - \Abtil^{-1} \Ub \Zb^{-1} \Vb^T \Abtil^{-1}\right)} &\leq \epsilon_1+ \epsilon_1\lambda \kappa(\Vb)\twonorm{\Bb^{-1}\Ab}
    \left( 2 \twonorm{\Ab^{-1}} + \epsilon_1 \right) \nonumber\\
    &+ \lambda(\twonorm{\Ab^{-1}} + \epsilon_1)^2 \left( \epsilon_2 + 2\epsilon_1 \lambda \,(\kappa(\Vb)\twonorm{\Bb^{-1}\Ab})^2 \right).
    \end{flalign*}
\end{corollary}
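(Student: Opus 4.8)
The plan is to obtain Corollary~\ref{forward old bound} as an immediate specialization of Theorem~\ref{forward_error_thm}, in which the abstract bound $\alpha \ge \twonorm{\inv{\Ib + \Vb^T \Ab^{-1}\Ub}}$ is replaced by the explicit quantity $\kappa(\Vb)\,\twonorm{\Bb^{-1}\Ab}$. The only extra ingredient needed is identity~(\ref{identity2}) of Lemma~\ref{lem:lemma_NL_1}.

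First I would start from identity~(\ref{identity2}), i.e., $\inv{\Ib + \Vb^T \Ab^{-1}\Ub} = \Vb^T \Bb^{-1}\Ab(\Vb^T)^\dagger$, take spectral norms, and apply submultiplicativity to get $\twonorm{\inv{\Ib + \Vb^T \Ab^{-1}\Ub}} \le \twonorm{\Vb^T}\,\twonorm{\Bb^{-1}\Ab}\,\twonorm{(\Vb^T)^\dagger}$. Because $\Vb$ has full column rank, $(\Vb^T)^\dagger = \Vb(\Vb^T\Vb)^{-1}$, so its spectral norm equals $\twonorm{\Vb^\dagger} = 1/\sigma_{\min}(\Vb)$; hence $\twonorm{\Vb^T}\,\twonorm{(\Vb^T)^\dagger} = \twonorm{\Vb}\,\twonorm{\Vb^\dagger} = \kappa(\Vb)$, and we conclude $\twonorm{\inv{\Ib + \Vb^T \Ab^{-1}\Ub}} \le \kappa(\Vb)\,\twonorm{\Bb^{-1}\Ab}$.

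Next I would invoke Theorem~\ref{forward_error_thm} with the admissible choice $\alpha = \kappa(\Vb)\,\twonorm{\Bb^{-1}\Ab}$ (admissible precisely by the previous step), under which the smallness hypothesis~(\ref{forward_bound_assumption}) is exactly what the phrase ``under the assumptions of Theorem~\ref{forward_error_thm}'' refers to once this $\alpha$ is fixed. Substituting this $\alpha$ into the right-hand side of~(\ref{forward_bound}) produces verbatim the inequality claimed in the corollary. If one prefers instead to treat the $\alpha$ of Theorem~\ref{forward_error_thm} as given and possibly smaller, the same conclusion follows by observing that every term of~(\ref{forward_bound}) that involves $\alpha$ carries a nonnegative coefficient, so the right-hand side is monotone nondecreasing in $\alpha$, and then enlarging $\alpha$ up to $\kappa(\Vb)\,\twonorm{\Bb^{-1}\Ab}$.

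There is no genuine obstacle here: the statement is just a reformulation of Theorem~\ref{forward_error_thm} through the lens of Lemma~\ref{lem:lemma_NL_1}. The only things deserving a moment of care are the identity $\twonorm{(\Vb^T)^\dagger} = \twonorm{\Vb^\dagger}$ for full-column-rank $\Vb$ and, if the ``fixed $\alpha$'' reading is adopted, the monotonicity of the right-hand side of~(\ref{forward_bound}) in $\alpha$; both are routine.
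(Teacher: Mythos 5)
Your proposal is correct and follows exactly the paper's own proof: apply identity~(\ref{identity2}) with submultiplicativity to bound $\alpha=\twonorm{\inv{\Ib+\Vb^T\Ab^{-1}\Ub}}$ by $\kappa(\Vb)\twonorm{\Bb^{-1}\Ab}$, then substitute into the right-hand side of eqn.~(\ref{forward_bound}). Your added remarks on $\twonorm{(\Vb^T)^\dagger}=\twonorm{\Vb^\dagger}$ and on monotonicity of the bound in $\alpha$ are correct and merely make explicit what the paper leaves implicit.
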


\begin{proof}
    Using eqn.~(\ref{identity2}), we can bound $\alpha$ as follows:
    \begin{align*}
        \alpha &= \twonorm{\inv{\Ib + \Vb^T \Ab^{-1} \Ub}}\\
        &=\twonorm{\Vb^T \Bb^{-1}\Ab (\Vb^T)^{\dagger}}\\
        &\leq \twonorm{\Vb^T} \twonorm{\Bb^{-1}\Ab} \twonorm{(\Vb^T)^{\dagger}} = \kappa(\Vb) \twonorm{\Bb^{-1}\Ab}.
    \end{align*}
    Substituting back into eqn.~(\ref{forward_bound}) yields the result.
\end{proof}

\subsubsection{Proof of Theorem~\ref{forward_error_thm}}
We now proceed with the proof of Theorem~\ref{forward_error_thm}. 
From Woodbury's identity,
\begin{equation*}
    \Bb^{-1} = \Ab^{-1} - \Ab^{-1} \Ub (\Ib + \Vb^T \Ab^{-1} \Ub)^{-1} \Vb^T \Ab^{-1}.
\end{equation*}
Let $\Eb_1=\Abtil^{-1}-\Ab^{-1}$ and $\Eb_2 = \Zb^{-1}- \left( \Ib + \Vb^T \Abtil^{-1} \Ub \right)^{-1}$. We break up $\widetilde{\Bb}^{-1}-\Bb^{-1}$ into five parts:
\begin{align}
       \widetilde{\Bb}^{-1}-\Bb^{-1} &= \left(\Abtil^{-1}- \Abtil^{-1}\Ub \Zb^{-1} \Vb^T \Abtil^{-1} \right) - \Bb^{-1} \nonumber\\
       &= \Eb_1 - \underbrace{\left( \Abtil^{-1}\Ub \Zb^{-1} \Vb^T \Abtil^{-1} - \Abtil^{-1} \Ub \left(\Ib+\Vb^T\Abtil^{-1} \Ub \right)^{-1} \Vb^T \Abtil^{-1}\right)}_{\Tb_1}\nonumber\\
       & - \underbrace{\left( \Abtil^{-1} \Ub \left(\Ib+\Vb^T\Abtil^{-1} \Ub \right)^{-1} \Vb^T \Abtil^{-1}-\Abtil^{-1} \Ub \left(\Ib+\Vb^T{\Ab}^{-1} \Ub \right)^{-1} \Vb^T \Abtil^{-1}\right)}_{\Tb_2}\nonumber\\
       & - \underbrace{\left( \Abtil^{-1} \Ub \left(\Ib+\Vb^T{\Ab}^{-1} \Ub \right)^{-1} \Vb^T \Abtil^{-1} - \Abtil^{-1} \Ub \left(\Ib+\Vb^T{\Ab}^{-1} \Ub \right)^{-1} \Vb^T {\Ab}^{-1}\right)}_{\Tb_3}\nonumber\\
       & - \underbrace{\left( \Abtil^{-1} \Ub \left(\Ib+\Vb^T{\Ab}^{-1} \Ub \right)^{-1} \Vb^T {\Ab}^{-1} - {\Ab}^{-1} \Ub \left(\Ib+\Vb^T{\Ab}^{-1} \Ub \right)^{-1} \Vb^T \Ab^{-1}\right)}_{\Tb_4}.\nonumber
\end{align}
We now bound each of the five terms. First, by our assumptions, $\twonorm{\Eb_1}\leq \epsilon_1$. In the remainder of the proof, we will repeatedly use the fact that $\twonorm{\Abtil^{-1}} \leq \twonorm{\Ab^{-1}}+\epsilon_1$, which follows by the triangle inequality on the two norm of $\Abtil^{-1}=\Ab^{-1}+\Eb_1$. Next,
    \begin{align*}
        \twonorm{\Tb_1} = \twonorm{\Abtil^{-1}\Ub \Eb_2 \Vb^T \Abtil^{-1}} \leq \twonorm{\Abtil^{-1}}^2 \twonorm{\Ub} \twonorm{\Vb} \,\epsilon_2 \leq ( \twonorm{\Ab^{-1}}+\epsilon_1 )^2 \,\lambda\, \epsilon_2.
    \end{align*} 
Using eqn.~(\ref{forward_bound_assumption}), we get
\begin{equation*}
    \twonorm{(\Ib+\Vb^T \Abtil^{-1} \Ub) - (\Ib+\Vb^T\Ab^{-1}\Ub)}=\twonorm{\Vb^T \Eb_1 \Ub} \leq \lambda \epsilon_1 < \frac{1}{2 \alpha}.
\end{equation*}
The last inequality follows by our assumption on $\epsilon_1$ and allows us to apply Lemma~\ref{first_lemma}:
\begin{equation*}
    \twonorm{(\Ib+\Vb^T \Abtil^{-1} \Ub)^{-1} - (\Ib+\Vb^T\Ab^{-1}\Ub)^{-1}} \leq 2\alpha^2 \lambda \epsilon_1.
\end{equation*}
We are now ready to bound the two-norm of $\Tb_2$:
\begin{align}
    \twonorm{\Tb_2}&\leq \twonorm{\Abtil^{-1}}^2 \twonorm{\Ub} \twonorm{\Vb} \twonorm{(\Ib+\Vb^T \Abtil^{-1} \Ub)^{-1} - (\Ib+\Vb^T\Ab^{-1}\Ub)^{-1}}  \nonumber\\
    &\leq \twonorm{\Abtil^{-1}}^2 \twonorm{\Ub} \twonorm{\Vb} \,\, \left( 2 \alpha^2 \lambda \,\epsilon_1 \right)\nonumber \\
    &\leq 2 (\twonorm{\Ab^{-1}} + \epsilon_1)^2 \alpha^2 \lambda^2 \epsilon_1 .\nonumber
\end{align}
Next, we bound the two-norm of $\Tb_3$:
\begin{align*}
    \twonorm{\Tb_3}&=\twonorm{ \Abtil^{-1} \Ub\left( \Ib + \Vb^T \Ab^{-1} \Ub\right)^{-1}  \Vb^T \Eb_1}\\
    &\leq \twonorm{\Abtil^{-1}} \twonorm{\Ub} \twonorm{\left( \Ib + \Vb^T \Ab^{-1} \Ub\right)^{-1}} \twonorm{\Vb}\twonorm{\Eb_1}\\
    &\leq (\twonorm{\Ab^{-1}} + \epsilon_1) \lambda \alpha \epsilon_1.
\end{align*}
Similarly,
\begin{equation*}
\twonorm{\Tb_4} \leq\twonorm{\Ab^{-1}} \,\lambda \alpha \epsilon_1.
\end{equation*}
Finally, we use the triangle inequality to combine all terms and complete the proof of Theorem~\ref{forward_error_thm}.

\subsection{Backward Error Bound}

We first discuss necessary assumptions for our backward error bound; next, we present and discuss our bound and the special case discussed in the introduction; and finally we present its proof.

\subsubsection{Assumptions}\label{sxn:backward:assumptions}
%
%
Recall that $\lambda = \twonorm{\Ub} \twonorm{\Vb}$. For our backward error bound, we will need the following assumptions:
\begin{align}
      & \twonorm{\Ib + \Vb^T \Ab^{-1} \Ub} \leq \beta \label{beta_assumption}\\
      & \Abtil\ \text{and}\ (\Zb^{-1})^{-1} - \Vb^T \Abtil^{-1} \Ub\ \text{are invertible},\label{backward_thm_assumption4}\\
      & \epsilon_1 < \frac{1}{2 \twonorm{\Ab}},  \label{backward_thm_assumption1}\\
      & \epsilon_2 < \frac{1}{2 \left( \beta+ \lambda \, \epsilon_1  \right)} \label{epsilon2_small},\ \text{and} \\
      &  2\left( \beta+ \lambda \, \epsilon_1  \right)^2 \epsilon_2 < \frac{1}{2} \label{epsilon2_small_2}.
\end{align}

\subsubsection{The bound}
    
\begin{theorem}
    Under the assumptions of Section~\ref{sxn:backward:assumptions},
    \begin{align}
    \twonorm{\Bb-\left(\Abtil^{-1} - \Abtil^{-1} \Ub \Zb^{-1} \Vb^T \Abtil^{-1}\right)^{-1}} 
    &\leq 2 \epsilon_1 \twonorm{\Ab}^2 + 4 \lambda\epsilon_2 \left(\beta+ \lambda \, \epsilon_1  \right)^2. \label{backward_fullbound}
\end{align}
\label{backward_thm}\end{theorem}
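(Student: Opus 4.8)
The plan is to recognize that the matrix $\widetilde{\Bb} := \left(\Abtil^{-1} - \Abtil^{-1} \Ub \Zb^{-1} \Vb^T \Abtil^{-1}\right)^{-1}$ is itself an instance of the Woodbury identity. Put $\Cb = \left((\Zb^{-1})^{-1} - \Vb^T \Abtil^{-1}\Ub\right)^{-1}$, which exists by the invertibility hypothesis~(\ref{backward_thm_assumption4}); then $\Cb^{-1} + \Vb^T\Abtil^{-1}\Ub = (\Zb^{-1})^{-1}$, so applying the Woodbury identity to $\Abtil$ (invertible by~(\ref{backward_thm_assumption4})) with the update $\Ub\Cb\Vb^T$ shows that $\Abtil^{-1} - \Abtil^{-1}\Ub\Zb^{-1}\Vb^T\Abtil^{-1}$ is invertible and $\widetilde{\Bb} = \Abtil + \Ub\Cb\Vb^T$. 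Since $\Bb = \Ab + \Ub\Ib\Vb^T$, subtracting gives $\widetilde{\Bb} - \Bb = (\Abtil - \Ab) + \Ub(\Cb - \Ib)\Vb^T$, so by the triangle inequality and submultiplicativity, $\twonorm{\widetilde{\Bb} - \Bb} \le \twonorm{\Abtil - \Ab} + \lambda\twonorm{\Cb - \Ib}$. It now suffices to bound the two summands separately.

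For the first, write $\Abtil^{-1} = \Ab^{-1} + \Eb_1$ with $\twonorm{\Eb_1} = \twonorm{\Abtil^{-1} - \Ab^{-1}} \le \epsilon_1$. Assumption~(\ref{backward_thm_assumption1}) says $\epsilon_1 \le \nicefrac{1}{2\twonorm{\Ab}}$, so Lemma~\ref{first_lemma} applied with $\Mb = \Abtil^{-1}$, $\Nb = \Ab^{-1}$, $\rho = \twonorm{\Ab}$ gives both $\twonorm{\Abtil} \le 2\twonorm{\Ab}$ and $\twonorm{\Abtil - \Ab} \le 2\twonorm{\Ab}^2\epsilon_1$, which is exactly the first term of~(\ref{backward_fullbound}). (If $\twonorm{\Ab} \le 1$, so the hypothesis $\rho>1$ of Lemma~\ref{first_lemma} is not met literally, the same two bounds follow from the Neumann series for $(\Ib + \Ab\Eb_1)^{-1}$, whose norm is at most $2$ since $\twonorm{\Ab\Eb_1} < \nicefrac12$.)

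For the second term we chain Lemma~\ref{first_lemma} twice more. First, $\twonorm{\Ib + \Vb^T\Abtil^{-1}\Ub} \le \twonorm{\Ib + \Vb^T\Ab^{-1}\Ub} + \twonorm{\Vb^T\Eb_1\Ub} \le \beta + \lambda\epsilon_1$ by~(\ref{beta_assumption}). Since $\Zb^{-1}$ approximates $(\Ib + \Vb^T\Abtil^{-1}\Ub)^{-1}$ within $\epsilon_2 \le \nicefrac{1}{2(\beta+\lambda\epsilon_1)}$ by~(\ref{epsilon2_small}), Lemma~\ref{first_lemma} with $\Mb = \Zb^{-1}$, $\Nb = (\Ib + \Vb^T\Abtil^{-1}\Ub)^{-1}$, $\rho = \beta+\lambda\epsilon_1$ yields $\twonorm{(\Zb^{-1})^{-1} - (\Ib + \Vb^T\Abtil^{-1}\Ub)} \le 2(\beta+\lambda\epsilon_1)^2\epsilon_2$. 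Hence $\twonorm{\bigl((\Zb^{-1})^{-1} - \Vb^T\Abtil^{-1}\Ub\bigr) - \Ib} = \twonorm{(\Zb^{-1})^{-1} - (\Ib + \Vb^T\Abtil^{-1}\Ub)} \le 2(\beta+\lambda\epsilon_1)^2\epsilon_2 < \nicefrac12$ by~(\ref{epsilon2_small_2}), so a final application of Lemma~\ref{first_lemma} with $\Nb = \Ib$ (or, once more, the Neumann series) gives $\twonorm{\Cb - \Ib} \le 4(\beta+\lambda\epsilon_1)^2\epsilon_2$. Multiplying by $\lambda$ and adding the first term yields~(\ref{backward_fullbound}).

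The main obstacle is the opening move: spotting that the approximate inverse of eqn.~(\ref{approx_inv_formula}) is precisely a Woodbury update of $\Abtil$ with capacitance $(\Zb^{-1})^{-1} - \Vb^T\Abtil^{-1}\Ub$, which is what makes $\widetilde{\Bb}$ collapse to the clean additive form $\Abtil + \Ub\Cb\Vb^T$ and turns the backward error into the two elementary perturbation estimates above. Once this is in place, the rest is bookkeeping; the only minor nuisance is the $\rho>1$ requirement in Lemma~\ref{first_lemma} in the two spots where the reference matrix ($\Ab^{-1}$, resp.\ $\Ib$) may have norm below $1$, which the short Neumann-series remark dispatches.
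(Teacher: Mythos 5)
Your proposal is correct and follows essentially the same route as the paper: the paper also reduces $\widetilde{\Bb}-\Bb$ to the sum of $\Abtil-\Ab$ and $\Ub\bigl(((\Zb^{-1})^{-1}-\Vb^T\Abtil^{-1}\Ub)^{-1}-\Ib\bigr)\Vb^T$ (it reaches this by applying Woodbury to $(\Ib-\Ub\Zb^{-1}\Vb^T\Abtil^{-1})^{-1}$ rather than by rewriting $\widetilde{\Bb}$ as $\Abtil+\Ub\Cb\Vb^T$ up front, but the identity and the subsequent three applications of Lemma~\ref{first_lemma} are the same). Your Neumann-series aside for the $\rho>1$ edge cases is a small tidiness improvement over the paper, which silently uses the lemma with $\rho=1$.
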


\begin{corollary}\label{remark:backward}
Assume that $\epsilon_1$, $\epsilon_2$  and $\sigma_{\min}(\Ab)$ are all upper bounded by one and also assume that $\epsilon_1$ and $\epsilon_2$ satisfy the assumptions of Theorem~\ref{backward_thm}. Additionally, assume that we have a small update, i.e., $\lambda \leq .5\cdot \sigma_{\min}(\Ab)$. Then, the right-hand side of the bound of eqn.~(\ref{backward_fullbound}) simplifies to:
\begin{equation*}
    \twonorm{\Bb-\left(\Abtil^{-1} - \Abtil^{-1} \Ub \Zb^{-1} \Vb^T \Abtil^{-1}\right)^{-1}}  \leq 2 \epsilon_1 \twonorm{\Ab}^2 + 8\epsilon_2.
\end{equation*}   
\end{corollary}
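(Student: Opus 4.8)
The plan is to specialize the bound of Theorem~\ref{backward_thm}, namely
\[
\twonorm{\Bb-\left(\Abtil^{-1} - \Abtil^{-1} \Ub \Zb^{-1} \Vb^T \Abtil^{-1}\right)^{-1}} \leq 2 \epsilon_1 \twonorm{\Ab}^2 + 4 \lambda\epsilon_2 \left(\beta+ \lambda \, \epsilon_1  \right)^2,
\]
under the ``small update'' hypothesis $\lambda \leq .5\cdot \sigma_{\min}(\Ab)$, together with $\epsilon_1,\epsilon_2,\sigma_{\min}(\Ab)\le 1$. The first term $2\epsilon_1\twonorm{\Ab}^2$ already matches the claimed bound verbatim, so the entire task reduces to showing that the second term $4\lambda\epsilon_2(\beta+\lambda\epsilon_1)^2$ is at most $8\epsilon_2$. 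This is a purely quantitative estimate that will follow from good upper bounds on $\lambda$ and on $\beta$.

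First I would bound $\beta$. By definition (eqn.~(\ref{beta_assumption})) we may take $\beta = \twonorm{\Ib + \Vb^T \Ab^{-1} \Ub}$, and by the triangle inequality and submultiplicativity, $\beta \leq 1 + \twonorm{\Vb^T \Ab^{-1}\Ub} \leq 1 + \lambda\twonorm{\Ab^{-1}}$. The small-update hypothesis gives $\lambda\twonorm{\Ab^{-1}} \leq .5\,\sigma_{\min}(\Ab)\twonorm{\Ab^{-1}} = .5$, so $\beta \leq 1.5$. (Exactly this computation already appears in the proof of Corollary~\ref{remark:forward}, so I can lift it.) Also $\lambda \epsilon_1 \leq \lambda \leq .5\sigma_{\min}(\Ab)\leq .5$, hence $\beta + \lambda\epsilon_1 \leq 2$, and therefore $(\beta+\lambda\epsilon_1)^2 \leq 4$.

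Next I need $\lambda$ to be small; the cleanest route is $\lambda \leq .5\,\sigma_{\min}(\Ab) \leq .5\twonorm{\Ab}$... but actually I only need $\lambda \leq .5$, which follows from $\lambda \leq .5\,\sigma_{\min}(\Ab) \leq .5\cdot 1 = .5$ using $\sigma_{\min}(\Ab)\le 1$ (equivalently $\twonorm{\Ab^{-1}}\ge 1$, so $\sigma_{\min}(\Ab)\le 1$). Combining, the second term satisfies
\[
4\lambda\epsilon_2(\beta+\lambda\epsilon_1)^2 \leq 4 \cdot (.5) \cdot \epsilon_2 \cdot 4 = 8\epsilon_2,
\]
which is precisely what is needed. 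Adding the two terms gives the asserted simplified bound $2\epsilon_1\twonorm{\Ab}^2 + 8\epsilon_2$, completing the proof.

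There is no real obstacle here; the only point requiring the slightest care is making sure the hypotheses of Theorem~\ref{backward_thm} (eqns.~(\ref{backward_thm_assumption4})--(\ref{epsilon2_small_2})) are genuinely implied by the standing assumptions of the corollary so that the theorem applies — the corollary statement explicitly says ``$\epsilon_1$ and $\epsilon_2$ satisfy the assumptions of Theorem~\ref{backward_thm}'', so I would simply invoke that hypothesis rather than re-derive it, and then perform the two-line arithmetic above. (If I wanted to be thorough I would note that $\beta\le 1.5$ and $\lambda\epsilon_1\le .5$ make $2(\beta+\lambda\epsilon_1)^2\epsilon_2 \le 8\epsilon_2$, so eqn.~(\ref{epsilon2_small_2}) reduces to $\epsilon_2 < \nicefrac{1}{16}$, a mild smallness requirement consistent with the blanket assumption $\epsilon_2\le 1$ being tightened as needed.)
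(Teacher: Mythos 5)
Your proof is correct and follows essentially the same route as the paper's: bound $\beta=\twonorm{\Ib+\Vb^T\Ab^{-1}\Ub}\le 1+\lambda\twonorm{\Ab^{-1}}\le 1.5$ via the small-update hypothesis, note $\lambda\epsilon_1\le\lambda\le .5$, and conclude $4\lambda\epsilon_2(\beta+\lambda\epsilon_1)^2\le 4\cdot(.5)\cdot 4\cdot\epsilon_2=8\epsilon_2$. The only difference is cosmetic (you are slightly more careful than the paper, which writes the triangle-inequality step for $\beta$ with an equals sign), so nothing further is needed.
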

\begin{proof}
Following the lines of the proof of Corollary~\ref{remark:forward}, we bound $\twonorm{\Ib + \Vb^T \Ab^{-1}\Ub}$ by a constant:
\begin{align*}
    \twonorm{\Ib + \Vb^T \Ab^{-1}\Ub} &= 1 +  \twonorm{\Vb} \twonorm{\Ab^{-1}} \twonorm{\Ub} \leq 1 + \lambda \twonorm{\Ab^{-1}} \leq 1.5.
\end{align*}
The second term in the right-hand side of eqn.~(\ref{backward_fullbound}) can be bounded as follows:
\begin{align*}
    4 \lambda\epsilon_2 \left( \beta+ \lambda \, \epsilon_1  \right)^2 
   &\leq  4 \lambda\epsilon_2  \left(1.5 +\lambda \epsilon_1 \right)^2\\
   &\leq 8\epsilon_2. 
\end{align*}
The second inequality follows from $\lambda \leq .5 \sigma_{\min}(\Ab) \leq .5$ and $\epsilon_1 \leq 1$.
\end{proof}

\begin{corollary}\label{beta large}
  Assume that $\epsilon_1=\epsilon_2=\epsilon$ and $\sigma_{\min}(\Ab)$ are all upper bounded by one and also assume that $\epsilon_1$ and $\epsilon_2$ satisfy the assumptions of Theorem~\ref{backward_thm}. Additionally, assume that $\beta$ is sufficiently large, i.e., $\beta\geq \max(\lambda,\nicefrac{\twonorm{\Ab}}{\sqrt{\lambda}},1)$. Then, the right-hand side of the bound of eqn.~(\ref{backward_fullbound}) simplifies to:
  \begin{equation*}
    \twonorm{\Bb-\left(\Abtil^{-1} - \Abtil^{-1} \Ub \Zb^{-1} \Vb^T \Abtil^{-1}\right)^{-1}}  \leq 18\lambda \epsilon \beta^2.
\end{equation*} 
\end{corollary}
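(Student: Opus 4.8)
The plan is to start from the backward error bound of Theorem~\ref{backward_thm},
\begin{equation*}
\twonorm{\Bb-\left(\Abtil^{-1} - \Abtil^{-1} \Ub \Zb^{-1} \Vb^T \Abtil^{-1}\right)^{-1}} \leq 2 \epsilon_1 \twonorm{\Ab}^2 + 4 \lambda\epsilon_2 \left(\beta+ \lambda \, \epsilon_1  \right)^2,
\end{equation*}
set $\epsilon_1 = \epsilon_2 = \epsilon$, and then bound each of the two terms on the right-hand side by a constant multiple of $\lambda\epsilon\beta^2$, exactly in the spirit of the proof of Corollary~\ref{alpha large} for the forward error.

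For the first term, I would invoke the hypothesis $\beta \geq \twonorm{\Ab}/\sqrt{\lambda}$, equivalently $\twonorm{\Ab}^2 \leq \lambda\beta^2$, which immediately gives $2\epsilon\twonorm{\Ab}^2 \leq 2\lambda\epsilon\beta^2$. For the second term, I would use $\epsilon \leq 1$ together with $\beta \geq \lambda$ to conclude $\lambda\epsilon \leq \lambda \leq \beta$, hence $\beta+\lambda\epsilon \leq 2\beta$ and $(\beta+\lambda\epsilon)^2 \leq 4\beta^2$; therefore $4\lambda\epsilon(\beta+\lambda\epsilon)^2 \leq 16\lambda\epsilon\beta^2$. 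Summing the two estimates produces the stated bound of $18\lambda\epsilon\beta^2$.

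There is no substantive obstacle; the argument is a short chain of elementary inequalities once Theorem~\ref{backward_thm} is in hand. The only points requiring care are: (i) checking that the hypotheses of Theorem~\ref{backward_thm} are available, which is assumed directly in the statement of the corollary; (ii) correctly matching each part of the condition $\beta \geq \max(\lambda, \twonorm{\Ab}/\sqrt{\lambda}, 1)$ to the term it controls — the $\twonorm{\Ab}/\sqrt{\lambda}$ lower bound absorbs the $\twonorm{\Ab}^2$ contribution coming from approximating $\Ab^{-1}$, while $\beta \geq \lambda$ lets the cross term $\lambda\epsilon$ be swallowed by $\beta$; and (iii) verifying that the constants indeed add up to exactly $18$.
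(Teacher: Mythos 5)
Your argument is correct and is essentially identical to the paper's proof: the first term is absorbed via $\twonorm{\Ab}^2 \leq \lambda\beta^2$ from $\beta \geq \twonorm{\Ab}/\sqrt{\lambda}$, and the second via $\lambda\epsilon_1 \leq \lambda \leq \beta$ giving $(\beta+\lambda\epsilon_1)^2 \leq 4\beta^2$, summing to $18\lambda\epsilon\beta^2$. No gaps.
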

\begin{proof}
    Using $\beta \geq \nicefrac{\twonorm{\Ab}}{\sqrt{\lambda}}$, we can bound the first term in eqn.~(\ref{backward_fullbound}) as follows:
    \begin{equation*}
        2\epsilon_1 \twonorm{\Ab}^2 \leq 2\lambda\epsilon \beta^2.
    \end{equation*}
    Next, using $\epsilon_1\leq 1$ and $\lambda\leq \beta$, we can bound the second term by
    \begin{align*}
        4\lambda \epsilon_2 (\beta+\lambda \epsilon_1)^2 &\leq 16\lambda \epsilon \beta^2.
    \end{align*}
    Adding the two terms together, we get
    \begin{equation*}
    \twonorm{\Bb-\left(\Abtil^{-1} - \Abtil^{-1} \Ub \Zb^{-1} \Vb^T \Abtil^{-1}\right)^{-1}}  \leq 18 \lambda \epsilon \beta^2.
\end{equation*} 
\end{proof}
We now provide an alternative formulation of Theorem~\ref{backward_thm}.
\begin{corollary}\label{backward old bound}
Under the assumptions of Theorem~\ref{backward_thm},
\begin{equation*}
    \twonorm{\Bb -\left(\Abtil^{-1} - \Abtil^{-1} \Ub \Zb^{-1} \Vb^T \Abtil^{-1}\right)^{-1}} \leq 2\epsilon_1 \twonorm{\Ab}^2 + 4\lambda\epsilon_2 (\kappa(\Vb)\twonorm{\Ab^{-1}\Bb}+\lambda\epsilon_1)^2
\end{equation*}
\end{corollary}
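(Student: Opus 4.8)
The plan is to obtain this as a direct corollary of Theorem~\ref{backward_thm}, by the same device used to pass from Theorem~\ref{forward_error_thm} to Corollary~\ref{forward old bound}: the only quantity we must choose is the parameter $\beta$, and we pin it down using identity~(\ref{identity1}) of Lemma~\ref{lem:lemma_NL_1} (the ``$\Bb^{-1}$-free'' companion of the identity that was used in Corollary~\ref{forward old bound}).

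First I would observe that $\beta$ enters Theorem~\ref{backward_thm} only through eqn.~(\ref{beta_assumption}), i.e.\ the requirement $\twonorm{\Ib + \Vb^T \Ab^{-1}\Ub} \leq \beta$. Invoking identity~(\ref{identity1}), $\Ib + \Vb^T \Ab^{-1} \Ub = \Vb^T \Ab^{-1} \Bb (\Vb^T)^\dagger$, and then using submultiplicativity of the spectral norm together with $\twonorm{\Vb^T}\,\twonorm{(\Vb^T)^\dagger} = \kappa(\Vb)$, I get
\begin{equation*}
\twonorm{\Ib + \Vb^T \Ab^{-1}\Ub} \;\leq\; \twonorm{\Vb^T}\,\twonorm{\Ab^{-1}\Bb}\,\twonorm{(\Vb^T)^\dagger} \;=\; \kappa(\Vb)\,\twonorm{\Ab^{-1}\Bb}.
\end{equation*}
Hence eqn.~(\ref{beta_assumption}) is satisfied with the explicit choice $\beta = \kappa(\Vb)\,\twonorm{\Ab^{-1}\Bb}$.

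Second, I would substitute this value of $\beta$ into eqn.~(\ref{backward_fullbound}), which directly yields the claimed bound $2\epsilon_1\twonorm{\Ab}^2 + 4\lambda\epsilon_2\bigl(\kappa(\Vb)\twonorm{\Ab^{-1}\Bb}+\lambda\epsilon_1\bigr)^2$. I do not expect any real obstacle here: the computation is a one-line substitution. The only point worth flagging is bookkeeping around hypotheses — the remaining assumptions of Theorem~\ref{backward_thm} that mention $\beta$, namely eqns.~(\ref{epsilon2_small}) and~(\ref{epsilon2_small_2}), are now to be read with this particular $\beta$; this is harmless because $\kappa(\Vb)\twonorm{\Ab^{-1}\Bb}$ is a genuine upper bound on $\twonorm{\Ib + \Vb^T \Ab^{-1}\Ub}$, so nothing is assumed beyond what ``under the assumptions of Theorem~\ref{backward_thm}'' already grants, and the proof is complete.
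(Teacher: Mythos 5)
Your proposal is correct and follows exactly the paper's own argument: bound $\beta$ via identity~(\ref{identity1}) and submultiplicativity to get $\beta \leq \kappa(\Vb)\twonorm{\Ab^{-1}\Bb}$, then substitute into eqn.~(\ref{backward_fullbound}). Your remark about the remaining $\beta$-dependent hypotheses is a fair (and slightly more careful) piece of bookkeeping than the paper spells out, but the substance is identical.
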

\begin{proof}
    Using eqn.~(\ref{identity1}), we can bound $\beta$ as follows:
    \begin{align*}
        \beta &= \twonorm{\Ib + \Vb^T \Ab^{-1}\Ub}\\
        &=\twonorm{\Vb^T \Ab^{-1}\Bb (\Vb^T)^{\dagger}}\\
        &\leq \twonorm{\Vb^T } \twonorm{ \Ab^{-1}\Bb} \twonorm{ (\Vb^T)^{\dagger}}\\
        &\leq \kappa(\Vb) \twonorm{ \Ab^{-1}\Bb}.
    \end{align*}
    Substituting the above to eqn.~(\ref{backward_fullbound}) gives the result.
\end{proof}
\subsubsection{Proof of Theorem~\ref{backward_thm}}

We now present the proof for our backward error bound.
Recall that $\widetilde{\Bb}^{-1}= \Abtil^{-1} - \Abtil^{-1} \Ub \Zb^{-1} \Vb^T \Abtil^{-1}$.
We first break $\inv{\widetilde{\Bb}^{-1}}-\Bb$ into two parts as follows:
     \begin{align*}
       & \inv{\widetilde{\Bb}^{-1}}-\Bb = \inv{\widetilde{\Bb}^{-1}} - \left( \Ab + \Ub \Vb^T \right)\\
       &=\underbrace{(\Abtil^{-1})^{-1}-\Ab}_{\Sb_1} + \underbrace{\inv{\widetilde{\Bb}^{-1}}-\left((\Abtil^{-1})^{-1}+\Ub \Vb^T\right)}_ {\Sb_2}.
   \end{align*}
To bound $\twonorm{\Sb_1}$, we apply Lemma~\ref{first_lemma}, since eqn.~(\ref{backward_thm_assumption1}) ensures that $\Abtil^{-1}$ is sufficiently close to $\Ab^{-1}$:
\begin{align*}
   \twonorm{\Sb_1}&= \twonorm{(\Abtil^{-1})^{-1}-(\Ab^{-1})^{-1}} \leq 2 \twonorm{\Ab}^2 \epsilon_1. 
\end{align*}
Then, we simplify $\Sb_2$ as follows:
\begin{align*}
   \Sb_2&= \left(\Abtil^{-1} - \Abtil^{-1} \Ub \Zb^{-1} \Vb^T \Abtil^{-1}\right)^{-1}-\left((\Abtil^{-1})^{-1}+\Ub \Vb^T \right)\\
   &= \left[\left(\Ib-\Ub \Zb^{-1} \Vb^T \Abtil^{-1} \right)^{-1} - \left( \Ib + \Ub \Vb^T \Abtil^{-1}\right)  \right] (\Abtil^{-1})^{-1}.
\end{align*}
Next, we apply Woodbury's identity to $\left(\Ib-\Ub \Zb^{-1} \Vb^T \Abtil^{-1} \right)^{-1}$ to get
\begin{align*}
   \Sb_2 &= \left[ \left(  \Ib + \Ub \left((\Zb^{-1})^{-1}-\Vb^T \Abtil^{-1} \Ub\right)^{-1} \Vb^T \Abtil^{-1}\right) - \left( \Ib + \Ub \Vb^T \Abtil^{-1}\right)\right] (\Abtil^{-1})^{-1}\\
    &= \Ub \left[ \left((\Zb^{-1})^{-1}-\Vb^T \Abtil^{-1} \Ub\right)^{-1} -\Ib \right] \Vb^T.
\end{align*}
Therefore, we get
\begin{equation}
    \twonorm{\Sb_2}\leq \lambda \twonorm{\left((\Zb^{-1})^{-1}-\Vb^T \Abtil^{-1} \Ub\right)^{-1} -\Ib}.
    \label{inter_result_1}
\end{equation}
It now remains to bound $\twonorm{\left((\Zb^{-1})^{-1}-\Vb^T \Abtil^{-1} \Ub\right)^{-1} -\Ib}$.
Using eqn.~(\ref{beta_assumption}), we get: 
 \begin{align*}
     \twonorm{\Ib + \Vb^T \Abtil^{-1} \Ub} &\leq \twonorm{\Ib + \Vb^T \Ab^{-1} \Ub} + \twonorm{ \Vb^T \Eb_1 \Ub}\\
     &\leq \beta + \lambda \, \epsilon_1.
 \end{align*}
Using the assumption of eqn.~(\ref{epsilon2_small}), it follows that $\Zb^{-1}$ is close enough to $\inv{\Ib + \Vb^T \Abtil^{-1}\Ub}$. We apply Lemma~\ref{first_lemma} to get:
\begin{equation*}
    \twonorm{\inv{\Zb^{-1}}-\left( \Ib + \Vb^T \Abtil^{-1} \Ub \right)} \leq 2 (\lambda \epsilon_1 + \beta)^2 \epsilon_2.
\end{equation*}
This shows that  $(\Zb^{-1})^{-1} - (\Vb^T \Abtil^{-1} \Ub) $ and $\Ib^{-1}$ are close. Using assumption of eqn.~(\ref{epsilon2_small_2}), we apply Lemma~\ref{first_lemma} again to get:
\begin{align*}
    \twonorm{ \left((\Zb^{-1})^{-1}-\Vb^T \Abtil^{-1} \Ub\right)^{-1} -\Ib} &\leq 2 \times 1^2 \times 2 \left(\beta + \lambda\,\epsilon_1 \right)^2 \epsilon_2\\
    &= 4 \left(\beta + \lambda\,\epsilon_1 \right)^2 \epsilon_2.
\end{align*}
Combining the above bound with eqn.~(\ref{inter_result_1}), we get:
\begin{equation*}
    \twonorm{\Sb_2} \leq 4 \lambda \left(\beta+ \lambda\,\epsilon_1 \right)^2 \epsilon_2.
\end{equation*}
Finally, we use the triangle inequality to conclude the proof. 
\section{Numerical Experiments}\label{sec_numerical_experiment}

The goal of our numerical experiments is to verify the tightness of our forward and backward error bounds as well as examining the dominating terms. We focus on randomly generated matrices and vary the parameters in Theorems~\ref{forward_error_thm} and~\ref{backward_thm} to evaluate our results. The code for reproducing the experiments is publicly available in our GitHub repository\footnote{\url{https://github.com/LinkaiMa/SMW}}.

\subsection{Varying approximation errors}
In the following experiments, we vary the approximation errors $\epsilon_1 \text{ and }\epsilon_2$, while having $\Ab, \Ub, \Vb$ and the update magnitude $\lambda$ fixed. We generate the matrices $\Ab$, $\Ub$, and $\Vb$ with entries sampled independently from the standard Gaussian distribution. The matrix $\Ab$ is $n \times n$ and invertible, while $\Ub$ and $\Vb$ have dimensions $n \times k$. We normalize $\Ub$ and $\Vb$ such that $\twonorm{\Ub} = \twonorm{\Vb} = \sqrt{\lambda}$. This allows us to control the magnitude of the update in our evaluations. 

For the approximate inverses, we generate two standard Gaussian matrices, $\Eb_{1}$ and $\Eb_{2}$, and fix their two-norms to be $\twonorm{\Eb_1} = \epsilon_1$ and $\twonorm{\Eb_2} = \epsilon_2$, respectively. Then, the perturbed inverses are set to $$\Abtil^{-1} = \Ab^{-1} + \Eb_1\quad \mbox{and}\quad \Zb^{-1} = \inv{\Ib + \Vb^T \Abtil^{-1} \Ub} + \Eb_2,$$ corresponding to the assumptions of eqns.~(\ref{eqn:assumption1}) and~(\ref{eqn:assumption2}). With $\Ab$, $\Ub$, $\Vb$, and $\lambda$ fixed, each experiment is repeated 100 times and the average two-norm of the overall error is presented in our plots.

\subsubsection{Forward error bound (Figure~\ref{forward_not_spd})} To examine the dominating term in the forward error bound, we set the errors in the approximate inverses to be equal, i.e., $\epsilon_1 = \epsilon_2 = \epsilon$. Since all the random matrices we used are Gaussian, we expect the capacitance matrix, $\Ib + \Vb^T \Ab^{-1} \Ub$, to be well behaved and only examine the terms in Corollary~\ref{remark:forward}. We investigate values of $\epsilon$ in the interval $[10^{-8},10^2]$. Regarding the magnitude of the update, we examine two settings, i.e., a small update (Figure~\ref{forward_not_spd}, top) and a large update (Figure~\ref{forward_not_spd}, bottom). For the small update, we set $\lambda = .5\sigma_{\min}(\Ab)$ and for the large update, we set $\lambda = .5\sigma_{\max}(\Ab)$. As we can see in Figure~\ref{forward_not_spd}, when the update is small, our forward error bound aligns closely with the simplified bound in eqn.~(\ref{simple_forward_bound}), which accurately captures the behavior of the forward error as $\epsilon$ varies. When the update is large, our forward error bound is less accurate. However, the simplified bound of eqn.~(\ref{simple_forward_bound}) is still close to the actual error.

\begin{figure}[h!]
    \centering
        \includegraphics[width=0.75\linewidth]{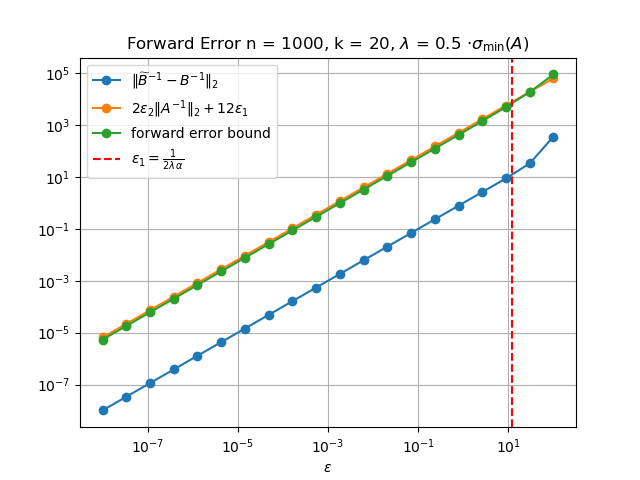}
        \vfill
        \includegraphics[width=0.75\linewidth]{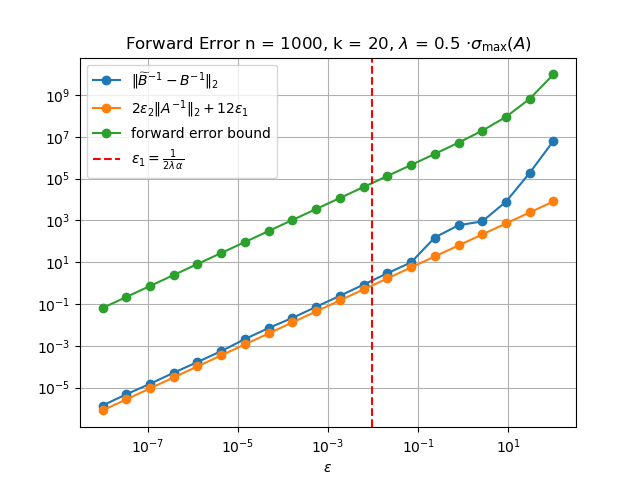}
        \caption{Rank-20 updates of the form $\Bb = \Ab + \Ub \Vb^{T}$, where $\Ab \in \R^{1000 \times 1000}$ is a standard Gaussian matrix.  $\Ub,\Vb \in \R^{1000\times 20}$ are standard Gaussian matrices scaled such that their two-norms (squared) are equal to $\lambda = .5\sigma_{\min}(\Ab)$ (top) or $\lambda = .5 \sigma_{\max}(\Ab)$ (bottom). The horizontal axis represents the values of $\epsilon_{1} = \epsilon_2 = \epsilon$. The \textcolor{blue}{blue dotted line} represents the forward error (i.e. the left-hand side of eqn.~(\ref{forward_bound})), while the \textcolor{green}{green dotted line} shows the respective bound (i.e. the right-hand side of eqn.~(\ref{forward_bound})), and the \textcolor{orange}{orange dotted line} represents the terms in Corollory~\ref{remark:forward}, specifically the term $2\epsilon \twonorm{\Ab^{-1}} + 12 \epsilon$. The  vertical \textcolor{red}{red dashed line} represents the maximum value of $\epsilon$ that satisfies the assumption of eqn.~(\ref{forward_bound_assumption}) for the forward error bound. The top plot shows the small update case, while the bottom plot shows the large update case. }
        \label{forward_not_spd}
\end{figure}

\subsubsection{Backward error bound (Figure~\ref{backward_not_spd})}  To examine the dominating term in the backward error bound, we again assume that $\epsilon_1 = \epsilon_2 = \epsilon$. We evaluate the backward error bound in eqn.~(\ref{backward_fullbound}) as well as our simplified bound in eqn.~(\ref{simple_backward_bound}), across values of $\epsilon$ in the interval $[10^{-8},10^2]$. In addition, we compare this with the backward error from direct inversion with the same inversion error, i.e., assume we can invert $\Bb$ directly with the cost of an error $\epsilon_3 = \epsilon$ represented by a Gaussian matrix. As for the magnitude of update, we examine two settings, i.e., small update (top) and large update (bottom). For the small update, we enforce $\lambda =.5\sigma_{\min}(\Ab)$. For the large update, we enforce $\lambda = .5\sigma_{\max}(\Ab)$. As we can see from Fig~\ref{backward_not_spd}, when the update is small, our backward error bound is essentially the same as the simplified bound in eqn.~(\ref{simple_backward_bound}), which accurately captures the behavior of the backward error when $\epsilon$ varies. Surprisingly, when the update is large, our backward error bound becomes more accurate. The simplified bound still aligns closely with the full bound.

\begin{figure}[h!]
    \centering
        \includegraphics[width=0.75\linewidth]{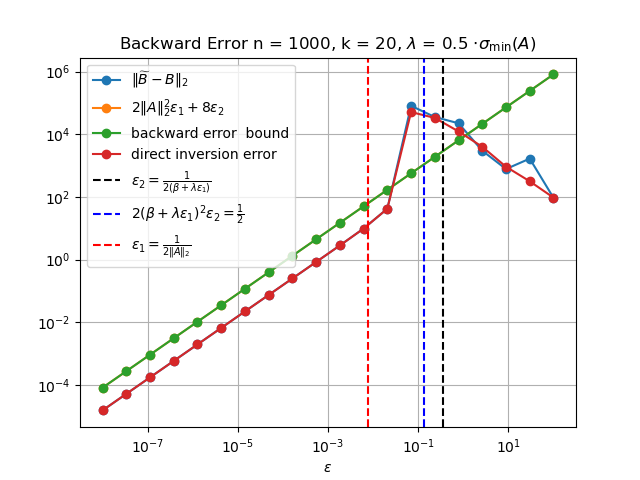}\vfill
        \includegraphics[width=0.75\linewidth]{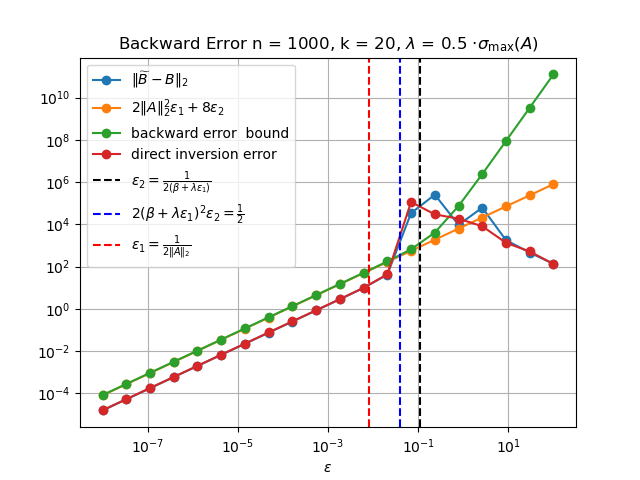}
        \caption{Rank-20 updates of the form $\Bb = \Ab + \Ub \Vb^{T}$, where $\Ab \in \R^{1000 \times 1000}$ is a standard Gaussian matrix.  $\Ub,\Vb \in \R^{1000\times 20}$ are standard Gaussian matrices scaled such that their two-norms (squared) are equal to $\lambda = \frac{1}{2}\sigma_{\min}(\Ab)$ (top) or $\lambda = \frac{1}{2} \sigma_{\max}(\Ab)$ (bottom). The horizontal axis represents the values of $\epsilon_{1} = \epsilon_2 = \epsilon$. The \textcolor{blue}{blue dotted line} represents the backward error (i.e. the left-hand side of eqn.~(\ref{backward_fullbound})), while the \textcolor{green}{green dotted line} shows the respective bound (i.e. the right-hand side of eqn.~(\ref{backward_fullbound})), and the \textcolor{orange}{orange dotted line} represents the terms in Corollary~\ref{remark:backward}, i.e, $2 \twonorm{\Ab}^2 \epsilon_1 + 8\epsilon_2$. The \textcolor{red}{red dotted line} represents the direct inversion backward error. Moreover, the vertical \textcolor{red}{red dashed line}, \textbf{black dashed line}, and \textcolor{blue}{blue dashed line} represent the maximum value of $\epsilon_1 = \epsilon_2 = \epsilon$ that satisfies the assumptions in eqns.~(\ref{backward_thm_assumption1}),~(\ref{epsilon2_small})\,and ~(\ref{epsilon2_small_2}) respectively for the backward error bound. The top plot shows the small update case, while the bottom plot shows the large update case.}
        \label{backward_not_spd}
\end{figure}

\subsection{Varying capacitance matrix}\label{vary capacitance}

In the following experiments, we vary the extreme singular values of the capacitance matrix, i.e., $\alpha = \twonorm{\Ib + \Vb \Ab^{-1}\Ub}$ and $\beta = \twonorm{\inv{\Ib + \Vb \Ab^{-1}\Ub}}$, while having $\Ab$ and $\lambda$ fixed. We manually construct special matrices $\Ab,\Ub,\Vb$ that allow us to control the capacitance matrix. The details of the constructions are technical, hence are deferred to Appendix~\ref{experiment_construction}. Similarly to the previous section, we use Gaussian noises to model the approximate inversion errors. Additionally, for a fixed $\Ab$ and $\lambda$, we repeat the same experiment $100$ times and present the average two-norm error in our plots.

\subsubsection{Forward error bound (Figure~\ref{forward_diagonal})} We vary the value of $\alpha$ over a wide range of values and examine the dominant term in eqn.~(\ref{simple_forward_bound_2}). Regarding the magnitude of update, we examine two settings, i.e., a small update (Figure~\ref{forward_diagonal}, top) and a large update (Figure~\ref{forward_diagonal}, bottom). For the small update, we enforce $\lambda = 2\sigma_{\min}(\Ab)$. For the large update, we enforce $\lambda = 2\sigma_{\max}(\Ab)$. As we can see in Figure~\ref{forward_diagonal}, in both small and large update cases, the dominant term in eqn.~(\ref{simple_forward_bound_2}) aligns closely with our forward error bound (\ref{forward_bound}) and successfully captures the behavior of the error as $\alpha$ grows. We also notice that while our bound stays rather close to the actual error for the small update case (top), it will be way off when the update is large (bottom). This is unfortunately due to the special construction of our experiments (see Appendix~\ref{experiment_construction}).

\begin{figure}[h!]
    \centering
        \includegraphics[width=0.75\linewidth]{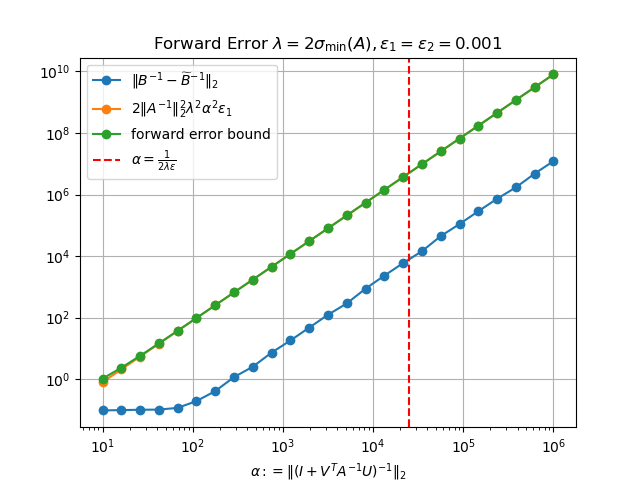}\vfill
        \includegraphics[width=0.75\linewidth]{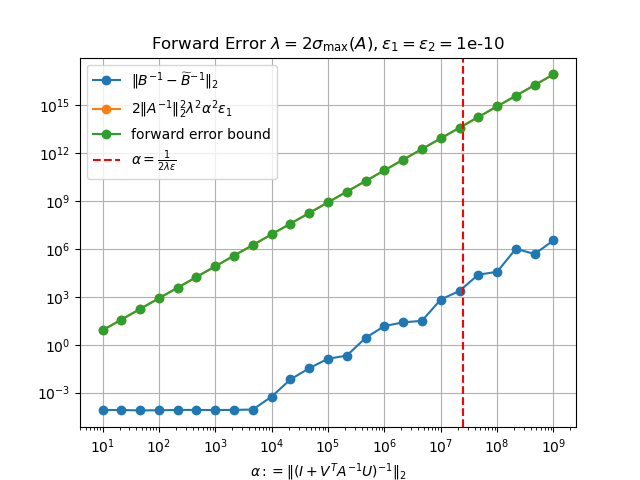}
        \caption{Rank-20 updates of the form $\Bb = \Ab + \Ub \Vb^{T}$, where $\Ab \in \R^{1000 \times 1000},\Ub\in \R^{1000\times 20} \text{ and }\Vb \in \R^{1000\times 20}$ are manually constructed matrices such that $\lambda = 2\sigma_{\min}(\Ab)$ (top) or $\lambda = 2 \sigma_{\max}(\Ab)$ (bottom). We set $\epsilon_1 = \epsilon_2 = 10^{-3}$ for the top plot and $\epsilon_1 = \epsilon_2 = 10^{-10}$ for the bottom plot. The horizontal axis represents the values of $\alpha = \twonorm{\inv{\Ib + \Vb^T \Ab^{-1} \Ub}}$. The \textcolor{blue}{blue dotted line} represents the forward error (i.e. the left-hand side of eqn.~(\ref{forward_bound})), while the \textcolor{green}{green dotted line} shows the respective bound (i.e. the right-hand side of eqn.~(\ref{forward_bound})), and the \textcolor{orange}{orange dotted line} represents the terms in eqn.~(\ref{simple_forward_bound_2}), i.e, $2 \twonorm{\Ab^{-1}}^2 \lambda^2 \alpha^2 \epsilon_1$. Moreover, the vertical \textcolor{red}{red dashed line} represents the maximum value of $\alpha$ that satisfies the assumption in eqn.~(\ref{forward_bound_assumption}) for the forward error bound. The top plot shows the small update case, while the bottom plot shows the large update case.}
        \label{forward_diagonal}
\end{figure}

\subsubsection{Backward error bounds (Figure~\ref{backward_diagonal})} In this experiment, we vary the value of $\beta$ and examine the dominant term in eqn.~(\ref{simple_backward_bound_2}). In terms of the magnitude of the update, we explore two settings: a small update (Figure~\ref{backward_diagonal}, top) and a large update (Figure~\ref{backward_diagonal}, bottom). For the small update, we set $\lambda = 100\sigma_{\min}(\Ab)$\footnote{We need $\lambda = 100\sigma_{\min}(\Ab)$ to guarantee that the values of $\beta$ vary across a wide range. This is due to the fact that if $\lambda\leq c\, \sigma_{\min}(\Ab)$ for some constant $c$, then $\beta =\twonorm{\Ib + \Vb^T \Ab^{-1}\Ub} \leq 1+ \lambda\twonorm{\Ab^{-1}}\leq 1+ c$.}. For the large update, we set $\lambda = 2\sigma_{\max}(\Ab)$. 
As shown in Figure~\ref{backward_diagonal}, in both the small and large update cases the dominant term in eqn.~(\ref{simple_forward_bound_2}) essentially coincides with our backward error bound in eqn.~(\ref{backward_fullbound}) when $\beta$ is large. Moreover, our bound closely approximates the actual error, differing only by a small constant.

\begin{figure}[h!]
    \centering
        \includegraphics[width=0.75\linewidth]{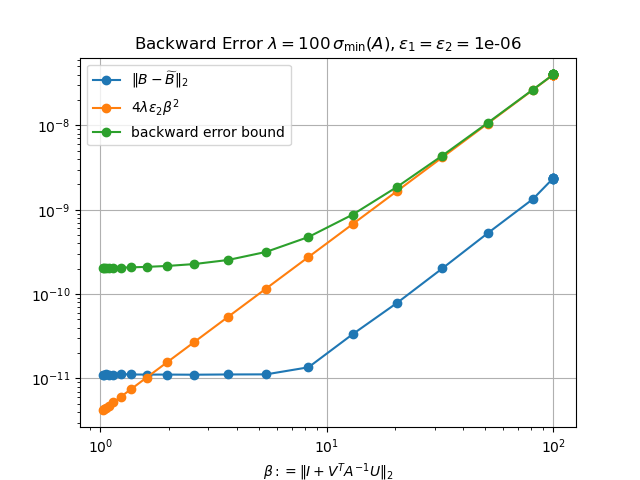}\vfill
        \includegraphics[width=0.75\linewidth]{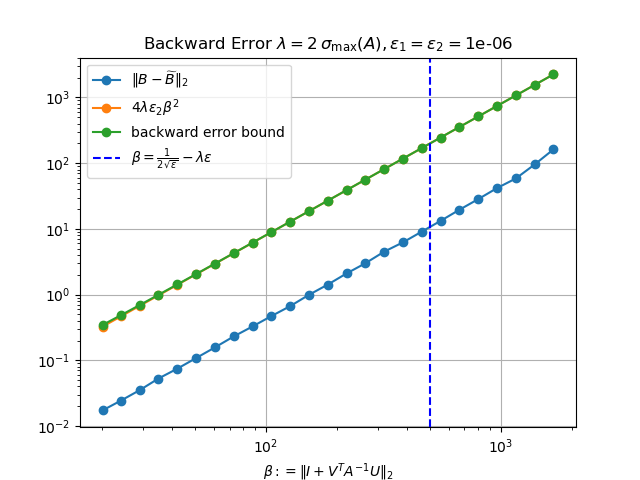}
        \caption{Rank-20 updates of the form $\Bb = \Ab + \Ub \Vb^{T}$, where $\Ab \in \R^{1000 \times 1000},\Ub\in \R^{1000\times 20} \text{ and }\Vb \in \R^{1000\times 20}$ are manually constructed matrices such that $\lambda = 100\sigma_{\min}(\Ab)$ (top) or $\lambda = 2 \sigma_{\max}(\Ab)$ (bottom). We set $\epsilon_1 = \epsilon_2 = 10^{-6}$ for all the experiments. The horizontal axis represents the values of $\beta = \twonorm{{\Ib + \Vb^T \Ab^{-1} \Ub}}$. The \textcolor{blue}{blue dotted line} represents the backward error (i.e. the left-hand side of eqn.~(\ref{backward_fullbound})), while the \textcolor{green}{green dotted line} shows the respective bound (i.e. the right-hand side of eqn.~(\ref{backward_fullbound})), and the \textcolor{orange}{orange dotted line} represents the terms in eqn.~(\ref{simple_forward_bound_2}), i.e, $4\lambda \epsilon_2 \beta^2$. The top plot shows the small update case, while the bottom plot shows the large update case. For the top plot, we checked that all the 3 assumptions from Thm~\ref{backward_thm}: eqns.~(\ref{backward_thm_assumption1}), ~(\ref{epsilon2_small}) and (\ref{epsilon2_small_2})\,are satisfied in all the experiments. For the bottom plot, we checked that both eqn.~(\ref{backward_thm_assumption1}) and eqn.~(\ref{epsilon2_small}) are satisfied in all the experiments. The assumption eqn.~(\ref{epsilon2_small_2}) will be violated when $\beta$ becomes too large. The \textcolor{blue}{blue dashed line} represents the maximum value of $\beta$ such that eqn.~(\ref{epsilon2_small_2}) is satisfied.}
        \label{backward_diagonal}
\end{figure}
\section{Future Work}\label{sec_future}
Several promising directions emerge from our investigation into the numerical stability of the Sherman-Morrison-Woodbury (SMW) formula. First, our experimental results (Section~\ref{sec_numerical_experiment}) reveal an interesting phenomenon where the simplified bounds in both forward (Theorem~\ref{forward_error_thm}) and backward (Theorem~\ref{backward_thm}) error bounds more closely approximate the actual error when the update matrices $(\Ub,\Vb)$ have large norms. While Theorems~\ref{forward_error_thm} and~\ref{backward_thm} provide general error bounds, they do not fully explain this observed correlation between update magnitude and bound tightness. A theoretical analysis of this relationship could lead to sharper error bounds in cases of large updates, potentially through the identification of problem-specific conditions that enable tighter approximations of the leading error terms.

Second, the empirical validity of our forward error bounds under relaxed conditions (Figure~\ref{forward_not_spd}) suggests that some technical assumptions in our analysis, particularly the assumption of eqn.~(\ref{forward_bound_assumption}), could be weakened or removed using alternative proof strategies. This invites further investigation into weaker sufficient conditions for stability, possibly through the use of alternative matrix inverse perturbation arguments.

Finally, in optimization algorithms, including interior point methods (IPMs) for linear programming and multiplicative weights update (MWU) for $\ell_p$ norm minimization, the SMW formula is applied multiple times throughout the algorithm, often to a projection matrix \cite{chowdhuri2020speeding,dexter2022convergence,chowdhury2022faster,ghadiri2023bit,ghadiri2024improving,anand2024bit}. Moreover, some works apply the SMW formula for solving structured linear regression problems \cite{fahrbach2022subquadratic}. An interesting direction for future research is to investigate whether our bounds could be improved in these special cases.

\clearpage

\printbibliography

\clearpage

\noindent\appendix 
\section{Proving the backward error bound of eqn.~(\ref{Ghadiri_main_result_norm_2})}\label{Ghadiri_main_result_norm_2_proof}
Let $\Ab$ and $\tilde{\Ab}$, both in $\R^{n \times n}$, be invertible matrices. Moreover, let $\Ub, \Vb \in \R^{n \times k}$ be such that $\Ab + \Ub \Vb^{T}$ is invertible. Let $\rho \geq 1$ and $\gamma > 0$ such that 
$\max\{\twonorm{\Ub},\twonorm{\Vb}\} \leq \gamma$ and $\max\{\twonorm{\Ab}, \, \twonorm{\Ab^{-1}}, \, \twonorm{\Ab + \Ub \Vb^{T}}, \, \twonorm{(\Ab + \Ub \Vb^{T})^{-1}}\} \leq \rho$.
Additionally, suppose
\begin{align} \label{app_ass_1}
    \twonorm{\tilde{\Ab} - \Ab} \leq \epsilon_{1} < 1.
\end{align}
Let $\Zb \in \R^{k \times k}$ be an invertible matrix and assume that
\begin{align} \label{app_ass_2}
    \twonorm{\Zb^{-1} - (\Ib + \Vb^{T}\tilde{\Ab}^{-1} \Ub)^{-1}} \leq \epsilon_{2}.
\end{align}
By the triangle inequality and eqn.~(\ref{app_ass_1}), we have
\begin{align}
    & \left\|\left( \tilde{\Ab}^{-1} - \tilde{\Ab}^{-1} \Ub \Zb^{-1} \Vb^{T} \tilde{\Ab}^{-1} \right)^{-1} - (\Ab + \Ub \Vb^{T}) \right\|_2 \notag \\
    &\leq \left\|\left( \tilde{\Ab}^{-1} - \tilde{\Ab}^{-1} \Ub \Zb^{-1} \Vb^{T} \tilde{\Ab}^{-1} \right)^{-1} - (\tilde{\Ab} + \Ub \Vb^{T}) \right\|_2 + \left\| \tilde{\Ab} - \Ab \right\|_2  \notag \\
    &\leq \left\|\left( \tilde{\Ab}^{-1} - \tilde{\Ab}^{-1} \Ub \Zb^{-1} \Vb^{T} \tilde{\Ab}^{-1} \right)^{-1} - (\tilde{\Ab} + \Ub \Vb^{T}) \right\|_2 + \epsilon_{1}. \label{app_trian}
\end{align}
Note that $\Sb = \Ib + \Vb^{T} \tilde{\Ab}^{-1} \Ub$ is a Schur complement of the following matrix
\begin{align*}
    \Tb = \begin{pmatrix} \Ib & \Vb^{T} \\ \Ub & -\tilde{\Ab} \end{pmatrix} 
    = \begin{pmatrix} \Ib & \boldsymbol{0} \\ \Ub & \Ib \end{pmatrix} \begin{pmatrix} \Ib & \boldsymbol{0} \\ \boldsymbol{0} & -\tilde{\Ab} - \Ub \Vb^{T} \end{pmatrix} \begin{pmatrix} \Ib & \Vb^{T} \\ \boldsymbol{0} & \Ib \end{pmatrix}.
\end{align*}
Since $\Ib$ and $-\tilde{\Ab} - \Ub \Vb^{T}$ (the Schur complement) are invertible matrices, $\Tb$ is also invertible and
\begin{align*}
    \Tb^{-1} = \begin{pmatrix}
        \Ib & -\Vb^{T} \\
        \boldsymbol{0} & \Ib
    \end{pmatrix} \begin{pmatrix} \Ib & \boldsymbol{0} \\ \boldsymbol{0} & -\tilde{\Ab} - \Ub \Vb^{T} \end{pmatrix}^{-1}  \begin{pmatrix} \Ib & \boldsymbol{0} \\ -\Ub & \Ib \end{pmatrix}.
\end{align*}
Notice that since $\Sb$ is the Schur complement of $\Tb$, $\Sb^{-1}$ is a principal submatrix of $\Tb^{-1}$. Therefore,
\begin{align}
    \left\|  \Sb^{-1} \right\|_2 &\leq \left\| \Tb^{-1}  \right\|_2 \notag
    \leq \left( 1+ \twonorm{\Ub} \right)  \left( 1+ \twonorm{\Vb} \right) \max \{ 1,  \left\|  \left( \tilde{\Ab} + \Ub \Vb^{T} \right)^{-1} \right\|_2  \}  \notag \\
    &\leq (1+\gamma)^2 \rho \label{app_schur}.
\end{align}
Moreover,
\begin{align*}
    \twonorm{\Sb} \leq 1 + \left\|  \Vb^{T} \right\|_2 \left\|  \tilde{\Ab}^{-1} \right\|_2 \left\|  \Ub \right\|_2 
    \leq \gamma^{2} \rho + 1. 
\end{align*}
Using the assumption of eqn.~(\ref{app_ass_2}) and Lemma~\ref{first_lemma} for $\epsilon_{2} \leq \nicefrac{1}{2(1+\gamma^2 \rho)}$, we get
\begin{align} \label{app_use_first_lemma}
    \left\|  \Zb - \left( \Ib + \Vb \tilde{\Ab}^{-1} \Ub \right) \right\|_2 \leq 2 \left( \gamma^{2} \rho + 1 \right)^2 \epsilon_{2}.
\end{align}
Now define
\begin{align*}
    \Mb = \begin{pmatrix}
        \Zb - \Vb^{T} \tilde{\Ab}^{-1} \Ub & \Vb^{T} \\ \Ub & -\tilde{\Ab}
    \end{pmatrix}.
\end{align*}
Then by eqn.~(\ref{app_use_first_lemma}),
\begin{align} \label{app_result_1_first_lemma}
    \twonorm{\Mb - \Tb} \leq 2 \left( \gamma^2 \rho +1 \right)^{2} \epsilon_{2}.
\end{align}
Note that $\Mb$ is invertible by construction and thus we can use Lemma~\ref{first_lemma} with eqn.~(\ref{app_use_first_lemma}) and the fact that $2 \left( \gamma^{2} \rho + 1 \right)^2 \epsilon_{2} \leq \nicefrac{1}{2 \left(1 + \gamma \right)^2 \rho} $ to get
\begin{align*}
     \left\|  \Mb^{-1} - \Tb^{-1} \right\|_2 \leq 4 (1 + \gamma)^{4} \rho ^{2} \left( \gamma^{2} \rho +1 \right)^{2} \epsilon_{2}.
\end{align*}
Observing that $-\tilde{\Ab} - \Ub \Vb^{T}$ and $-\tilde{\Ab} - \Ub \left( \Zb - \Vb^{T} \tilde{\Ab}^{-1} \Ub \right)^{-1} \Vb^{T}$ are the Schur complements of the corresponding blocks of $\Tb$ and $\Mb$ (respectively), we can derive
\begin{align*}
   & \left\| \left( \tilde{\Ab} + \Ub \left( \Zb - \Vb^{T} \tilde{\Ab}^{-1} \Ub \right)^{-1} \Vb^{T}\right)^{-1} - \left(\tilde{\Ab} + \Ub \Vb^{T}  \right)^{-1} \right\|_2 \\ &\leq  \left\|  \Mb^{-1} - \Tb^{-1} \right\|_2 \leq 4 (1 + \gamma)^{4} \rho ^{2} \left( \gamma^{2} \rho +1 \right)^{2} \epsilon_{2}.
\end{align*}
Using the Woodbury identity, we get
\begin{align*}
    & \left( \tilde{\Ab} + \Ub \left( \Zb - \Vb^{T} \tilde{\Ab}^{-1} \Ub \right)^{-1} \Vb^{T}\right)^{-1} \\ &= \tilde{\Ab}^{-1} - \tilde{\Ab}^{-1} \Ub \left(\Zb - \Vb^{T} \tilde{\Ab}^{-1} \Ub + \Vb^{T} \tilde{\Ab}^{-1} \Ub \right)^{-1} \Vb^{T} \tilde{\Ab}^{-1} \\
    &= \tilde{\Ab}^{-1} - \tilde{\Ab}^{-1} \Ub \Zb^{-1} \Vb^{T} \tilde{\Ab}^{-1}. 
\end{align*}
It now follows that
\begin{align*}
   \left\|  \left(  \tilde{\Ab}^{-1} - \tilde{\Ab}^{-1} \Ub \Zb^{-1} \Vb^{T} \tilde{\Ab}^{-1} \right) - \left(\tilde{\Ab} + \Ub \Vb^{T}  \right)^{-1} \right\|_2 \leq 4 (1 + \gamma)^{4} \rho ^{2} \left( \gamma^{2} \rho +1 \right)^{2} \epsilon_{2}.
\end{align*}
Thus, using $\left\| \left( \Ab + \Ub \Vb^{T} \right)^{-1}\right\|_2 \leq \rho$, assuming $4 (1 + \gamma)^{4} \rho ^{2} \left( \gamma^{2} \rho +1 \right)^{2} \epsilon_{2} \leq \nicefrac{1}{2 \rho}$, and using Lemma~\ref{first_lemma}, we get
\begin{align*}
    \left\|  \left(  \tilde{\Ab}^{-1} - \tilde{\Ab}^{-1} \Ub \Zb^{-1} \Vb^{T} \tilde{\Ab}^{-1} \right)^{-1} - \left(\tilde{\Ab} + \Ub \Vb^{T}  \right) \right\|_2 \leq 8 (1 + \gamma)^{4} \rho^{4} \left( \gamma^{2} \rho +1  \right)^{2} \epsilon_{2}.
\end{align*}
The final bound follows after combining the above bound with eqn.~(\ref{app_trian}). Recall that $\rho \geq 1$ and assume that $\gamma \leq \rho$ to conclude
\begin{align*}
     \left\|\left( \tilde{\Ab}^{-1} - \tilde{\Ab}^{-1} \Ub \Zb^{-1} \Vb^{T} \tilde{\Ab}^{-1} \right)^{-1} - (\Ab + \Ub \Vb^{T}) \right\|_2 \leq 512\epsilon_2\rho^{14} + \epsilon_{1}.
\end{align*}


\section{Proof of Lemma~\ref{first_lemma}}\label{first_lemma_proof}
    Let $\rho > 1$ such that $\| \Nb \|_{2}, \, \| \Nb \|_{2}^{-1} \leq \rho$. Additionally assume that $\| \Mb - \Nb \|_{2} \leq \epsilon \leq \frac{1}{2 \rho}$, and let $\Eb = \Mb - \Nb$. By Woodbury identity:
    \begin{equation}
        \Mb^{-1} = \Nb^{-1} - \Nb^{-1}\Eb (\Ib + \Nb^{-1} \Eb)^{-1} \Nb^{-1} \nonumber
    \end{equation}
    Also note that:
    \begin{equation}
        \left( \Ib + \Nb^{-1} \Eb \right)^{-1} = \left( \Ib + \Nb^{-1} \Mb - \Ib \right)^{-1} = \Mb^{-1} \Nb. \nonumber
    \end{equation}
    Therefore
    \begin{align*}
        \|\Mb^{-1}\|_2 &= \| \Nb^{-1} - \Nb^{-1} \Eb \Mb^{-1}\|_2 \\&\leq \|\Nb^{-1}\|_2 + \|\Nb^{-1} \Eb \Mb^{-1}\|_2\\
        &\leq \|\Nb^{-1}\|_2 + \|\Nb^{-1}\|_2 \|\Eb\|_2 \|\Mb^{-1}\|_2\\
        &\leq \rho + \rho \,\epsilon \,\|\Mb^{-1}\|_2.
    \end{align*}
    Therefore 
    \begin{equation*}
        \|\Mb^{-1}\|_2 \leq \frac{\rho}{1 - \rho \varepsilon} \leq 2\rho,
    \end{equation*}
    which implies that
    \begin{align*}
        \|\Mb^{-1} - \Nb^{-1}\|_2 = \|\Nb^{-1} \Eb \Mb^{-1}\|_2
        \leq \|\Nb^{-1}\|_2 \|\Eb\|_2 \|\Mb^{-1}\|_2  \leq 2\rho^2  \epsilon.
    \end{align*}

\section{Proof of Lemma~\ref{lem:lemma_NL_1}}\label{appendix_proof_1}
From our assumptions, $\Ub$ and $\Vb$ have full column rank. Therefore, $\Ub^{\dagger} \Ub = \Ib$, $\Vb^{\dagger} \Vb = \Ib$, and $\Vb^T \Pinv{\Vb^T} = \Ib$.
To prove eqn.~(\ref{identity1}), we start with
\begin{align*}
    \Vb^T \Ab^{-1} \Bb (\Vb^T)^{\dagger} &=  \Vb^T \Ab^{-1} (\Ab + \Ub \Vb^T) (\Vb^T)^{\dagger}\\
    &= \Vb^T \left( \Ib +\Ab^{-1} \Ub \Vb^T \right)(\Vb^T)^{\dagger}\\
    &= \Vb^T (\Vb^T)^{\dagger} + \Vb^T \Ab^{-1} \Ub \Vb^T (\Vb^T)^{\dagger}\\
    &= \Ib + \Vb^T \Ab^{-1} \Ub.
\end{align*}
Next, to establish eqn.~(\ref{identity2}), we first simplify $\Vb^T \Bb^{-1} \Ab (\Vb^T)^{\dagger}$ as follows:
\begin{align*}
    \Vb^T \Bb^{-1} \Ab (\Vb^T)^{\dagger} &= \Vb^T \Bb^{-1} \left( \Bb - \Ub \Vb^T \right)(\Vb^T)^{\dagger}\\
    &= \Vb^T \left(\Ib - \Bb^{-1} \Ub \Vb ^T \right) (\Vb^T)^{\dagger}\\
    &= \Vb^T (\Vb^T)^{\dagger} - \Vb^T \Bb^{-1} \Ub \Vb^T (\Vb^T)^{\dagger}\\
    &= \Ib - \Vb^T \Bb^{-1} \Ub.
\end{align*}
Then, we prove eqn.~(\ref{identity2}) by verifying $\left( \Ib + \Vb^T \Ab^{-1} \Ub \right)^{-1}\left( \Ib + \Vb^T \Ab^{-1} \Ub \right)=\Ib$ (and vice-versa, omitted):
\begin{align*}
     \left( \Ib + \Vb^T \Ab^{-1} \Ub \right)^{-1}\left( \Ib + \Vb^T \Ab^{-1} \Ub \right) &= \Vb^T \Bb^{-1} \Ab (\Vb^T)^{\dagger} \left( \Ib + \Vb^T \Ab^{-1} \Ub\right) \\&= \left(  \Ib - \Vb^T \Bb^{-1} \Ub\right)  \left( \Ib + \Vb^T \Ab^{-1} \Ub\right)\\
     &= \Ib + \Vb^T \Ab^{-1} \Ub - \Vb^T \Bb^{-1} \Ub - \Vb^T \Bb^{-1} \Ub \Vb^T \Ab^{-1} \Ub\\
     &= \Ib + \Vb^T(\Ab^{-1}-\Bb^{-1})\Ub - \Vb^T \Bb^{-1} (\Bb-\Ab)\Ab^{-1}\Ub\\
     &= \Ib  + \Vb^T(\Ab^{-1}-\Bb^{-1})\Ub - \Vb^T (\Ab^{-1}-\Bb^{-1}) \Ub\\
     &= \Ib.
\end{align*}

\section{Details on the experiments in Section~\ref{vary capacitance}}\label{experiment_construction}

\subsection{Matrix constructions for forward error experiments}

We first generate random Gaussian matrices and perform a QR decomposition to obtain $n \times n$ orthogonal matrices $\Ub_{\Ab}$ and $\Vb_{\Ab}$. Then, we set the singular values of $\Ab$ to 
$$\bold{S}_{\Ab} = \texttt{np.logspace(2,-2,n)}.$$ 
Let $\bold{\Sigma}_{\Ab} = \text{diag}(\Sb_{\Ab})$, then, we construct $\Ab$ as $\Ab = \Ub_{\Ab} \bold{\Sigma}_{\Ab} \Vb_{\Ab}^T$. Next, we generate $\Qb \in \mathbb{R}^{n\times k}$ such that 
$$ \Qb^T = \begin{bmatrix} 0;\ \Ib_k\end{bmatrix}.$$ 
Finally, we construct a diagonal matrix $\Sb \in \mathbb{R}^{n \times n}$ that enables us to control the parameters $\alpha$ and $\lambda$. Specifically, the top $n-k$ entries on the diagonal of $\Sb$ are set to zero and the $(n-k+1)$-st entry is set to $\lambda$. The last entry on the diagonal is set to $(\nicefrac{1}{\alpha}-1)\sigma_{\min}(\Ab)$. For $j=n-k+1, \ldots, n-1$, we set $\Sb[j,j] = |\nicefrac{1}{\alpha}-1|\sigma_{j}(\Ab)$. The final construction is 
$$\Ub = \Ub_{\Ab}\Qb\quad \text{and}\quad \Vb = \Vb_{\Ab}\Sb\Qb.$$
Using this construction, we can check that the smallest singular value of the capacitance matrix is indeed $\alpha$:
\begin{align*}
    \Ib + \Vb^T \Ab^{-1}\Ub &= \Ib + \Qb^T \Sb^T \Vb_{\Ab}^T \Vb_{\Ab} \bold{\Sigma}_{\Ab}^{-1} \Ub_{\Ab}^T \Ub_{\Ab} \Qb\\
    &=\Ib + \Qb^T \Sb^T \bold{\Sigma}_{\Ab}^{-1} \Qb.
\end{align*}
Let $\lfloor \Ab \rfloor_k$ denote the bottom-right $k\times k$ block of a $n \times n$ matrix. Then
\begin{align*}
    \Ib + \Vb^T \Ab^{-1}\Ub &= \Ib + \lfloor \Sb^T \bold{\Sigma}_{\Ab}^{-1} \rfloor_k.
\end{align*}
By our construction, 
\begin{align*}
   \Ib + \lfloor \Sb^T \bold{\Sigma}_{\Ab}^{-1} \rfloor_k= \begin{bmatrix}
1+\frac{\lambda}{\sigma_{n-k}(A)} & 0 & \cdots & 0 \\
0 &1+ \left|\frac{1}{\alpha}-1\right| & \ddots & \vdots \\
\vdots & \ddots &1+ \left|\frac{1}{\alpha}-1\right| & 0 \\
0 & \cdots & 0 & \frac{1}{\alpha}
\end{bmatrix},
\end{align*}
which has its smallest singular value equal to $\nicefrac{1}{\alpha}$.

As a final note, recall that in the proof of Theorem~\ref{forward_error_thm}, we used the inequality $\|\Vb^T \Eb_2 \Ub\|_2 \leq \lambda \epsilon_2$. This bound is tight when the singular vectors corresponding to the largest singular values of $\Vb$, $\Eb_2$, and $\Ub$ roughly align. When the update is large, i.e. $\lambda \approx \sigma_{\max}(\Ab)$, then the term $\sigma_{\max}(\Vb) (= \lambda)$ becomes significantly larger than the other singular values of $\Vb$. However, the top singular vectors of $\Vb$ do not align with those of $\Eb_2$ and $\Ub$ and, as a result, our bound is pessimistic.

\subsection{Matrix constructions for backward error experiments} For the small update setting, we generate a diagonal matrix $\Ab$ as follows: The top $60\%$ of the diagonal entries are generated using
$$\texttt{np.logspace}(-2,-8,\texttt{int}(0.6\cdot n)).$$ 
The remaining entries on the diagonal are set to be $10^{-8}$. For the large update, the diagonal entries are generated using
$$\texttt{np.logspace(2,-2,n)}.$$ 
For each experiment, we first create an orthogonal matrix $\Qb\in \mathbb{R}^{n\times k}$ of the form 
$$\Qb^T = \begin{bmatrix} 0;\ \Ib_k;\ 0\end{bmatrix},$$ 
where the position of the first row of $\Ib_k$ increases from around $\nicefrac{n}{4}$ to $\nicefrac{3n}{4}$, in increments of $k$. For each $\Qb$, we set $\Ub = \Ab \Qb$. Then we generate a standard Gaussian matrix $\Sb$ and normalize it to satisfy $\twonorm{\Sb} = \nicefrac{\lambda}{\twonorm{\Ub}}$. Finally, we set $\Vb = \Qb \Sb^T$. 
To better understand this construction, we can compute the resulting capacitance matrix as follows:
\begin{align*}
    \Ib + \Vb^T \Ab^{-1} \Ub = \Ib + \Sb \Qb^T \Ab^{-1} \Ab \Qb
    = \Ib + \Sb.
\end{align*}
By varying $\Qb$, we can vary $\twonorm{\Ub}$ and therefore $\twonorm{\Sb}$, which directly changes $\beta = \twonorm{\Ib + \Vb^T \Ab^{-1} \Ub} = \twonorm{\Ib + \Sb}$.

\end{document}